\def\[{\begin{equation}}
\def\]{\end{equation}}
\def\nn{\nonumber}
\def\t{\top}
\def\A{{\mathcal A}}
\numberwithin{equation}{section}
\begin{document}
\graphicspath{{./PIC/}}

\title{Error analysis of approximation algorithm for standard bi-quadratic programming}


\author{Chen Ling \and Hongjin He \and Liqun Qi}


\institute{C. Ling \and H. He \at
Department of Mathematics, School of Science, Hangzhou Dianzi University, Hangzhou, 310018, China.\\
\email{cling\_zufe@sina.com}
\and H. He \at
\email{hehjmath@hdu.edu.cn}
\and L. Qi \at
Department of Applied Mathematics, The Hong Kong Polytechnic University, Hung Hom, Kowloon, Hong Kong. \\
\email{maqilq@polyu.edu.hk}
 }

\date{Received: date / Accepted: date}

\maketitle

\begin{abstract}
We consider the problem of approximately solving a {\it standard bi-quadratic programming} (StBQP), which is NP-hard. After reformulating the original  problem as an equivalent copositive tensor programming, we show how to approximate the optimal solution by approximating the cone of copositive tensors via a serial polyhedral cones. The established quality of approximation shows that, a {\it polynomial time approximation scheme} (PTAS) for solving StBQP exists and can be extended to solving standard multi-quadratic programming. Some numerical examples are provided to illustrate our approach.

\keywords{Standard bi-quadratic polynomial optimization \and copositive tensor \and PTAS \and quality of approximation \and standard multi-quadratic polynomial optimization}


\end{abstract}

\section{Introduction}\label{Sec1}
We consider a polynomial optimization problem of the form
\begin{equation}\label{polyOpt}
\begin{array}{rl}
p^{\rm min}_{\mathcal{A}}={\rm min}&p_{\mathcal{A}}(x,y):=\displaystyle\sum_{i,j=1}^n\sum_{k,l=1}^ma_{ijkl}x_{i}x_{j}y_{k}y_l\\
{\rm ~~~~s.t.}&x\in \Delta_n,~~y\in \Delta_m,
\end{array}
\end{equation}
where $$
\Delta_s:=\left\{z\in \Re_+^s~\Big{|}~\sum_{i=1}^sz_i=1\right\}
$$
is the standard simplex and $\Re_+^s$ denotes the non-negative orthant in $s$-dimensional Euclidean space $\Re^s$. Here, $\mathcal{A}:= (a_{ijkl})_{1\leq i,j\leq n,1\leq k,l\leq m}$ is a real $(2,2)$-th order $n\times n\times m\times m$-dimensional tensor. Without loss of generality, we assume that the tensor $\mathcal{A}$ satisfies the following symmetry condition:
\begin{equation}\label{partsymm}
a_{ijkl}=a_{jikl}=a_{jilk}, ~~\forall~i,j=1,2,\ldots,n;~k,l=1,2,\ldots,m.
\end{equation}
We call the tensor satisfying (\ref{partsymm}) is partially symmetric. It is easy to see that, in case where all $a_{ijkl}$ are independent of the indices $k$ and $l$, i.e., $a_{ijkl} = b_{ij}$ for every
$i,j=1,\ldots,n$, the original problem (\ref{polyOpt}) reduces to the following {\it standard quadratic programming} (StQP)
\begin{equation}\label{StQPmod}
\begin{array}{rl}
f_{\rm min}={\rm min}&f(x):=\displaystyle\sum_{i,j=1}^nb_{ij}x_{i}x_{j},\\
{\rm s.t.}&x\in \Delta_n.
\end{array}
\end{equation}
Hence, the problem (\ref{polyOpt}) is called a {\it standard bi-quadratic programming} (StBQP). StQP not only occur frequently as subproblem in escape procedures for general quadratic programming, but also have manifold applications, e.g., in portfolio selection and in maximum weight clique problem
for undirected graphs. For details, see, e.g. \cite{BBPP99,BP05,M52,P80} and references therein. If we consider portfolio selection problems with two groups of securities whose investment decisions influence each other, then a generalized mean-variance model can be expressed as a StBQP, see \cite{BLQZ12} for details. In that paper, some optimality conditions of  StBQP were studied, and based upon a continuously differentiable penalty function, the original problem was converted into the problem of locating an unconstrained global minimizer of bi-quartic problem.

In terms of $\mathcal{A}$, it is easy to see that the objective function in (\ref{polyOpt}) can be written briefly as
$$
p_{\mathcal{A}}(x,y)=(\mathcal{A}xx^\top)\bullet(yy^\top)=(yy^\top\mathcal{A})\bullet(xx^\top),
$$
where $$\mathcal{A}xx^\top=\left(\underset{i,j=1}{\sum^n}a_{ijkl}x_ix_j\right)_{1\leq k,l\leq m}~~{\rm and}~~yy^\top\mathcal{A}=\left(\sum_{k,l=1}^ma_{ijkl}y_ky_l\right)_{1\leq i,j\leq n}$$ are two $m\times m$ and $n\times n$ symmetric matrices, respectively, and $X\bullet Y$ stands for usual Frobenius inner product for matrices, i.e., $X\bullet Y = {\rm tr}(X^\top Y )$.

The problem of solving (\ref{polyOpt}) is NP-hard, even if the objective $p$ is a quadratic function, see \cite{BK02,Nes99,NWY00}. Therefore,
designing some efficient algorithms for finding approximation solutions of (\ref{polyOpt}) is of interest. In \cite{LZQ14}, some approximation bounds for the standard
bi-quadratic optimization problem were presented. Moreover, by using the variables $z_i^2$ and $w_j^2$ to replace $x_i$ and $y_j$ respectively, the original problem can be rewritten as
\begin{equation*}
\begin{array}{rl}
\min&g(z,w):=\displaystyle\sum_{i,j=1}^n\sum_{k,l=1}^ma_{ijkl}z^2_{i}z^2_{j}w^2_{k}w^2_l\\
{\rm ~~~~s.t.}&\|z\|^2=1,\;\;\|w\|^2=1,\;\;(z,w)\in\Re^n\times\Re^m.
\end{array}
\end{equation*}
Base on this, a polynomial-time approximation algorithm with relative approximation ratio was studied, the obtained result is a bi-quadratic version of that presented in \cite{LHZ12,So11}.
It is well-known that StQP does allow a {\it polynomial time approximation scheme} (PTAS), as was shown by Bomze and De Klerk \cite{BK02}. For the more general minimization of polynomial of fixed degree over the simplex, De Klerk, Laurent and Parrilo \cite{KLP06} also showed the existence of a PTAS. Recently, by using Bernstein approximation and the multinomial distribution, a new proof of PTAS for fixed-degree polynomial optimization over the simplex was presented, see \cite{KLS14} for details. Indeed, in the case where feasible set is single simplex, the PTAS is particularly simple, and takes the minimum
of $f$ on the regular grid $\Delta_n(r)=\{x\in \Delta_n~|~(r+2)x\in \mathbb{N}^n\}$ for increasing values of $r\in \mathbb{N}$. Denote the minimum over the grid by
$$
f_{\Delta}^{(r)}={\rm min}\;\{f(x)~|~x\in \Delta_n(r)\}.
$$ It is obvious that the computation of $f_{\Delta}^{(r)}$ requires $|\Delta_n(r)|=\binom{n+r+1}{r+2}$ evaluations of $f$.

Moreover, we see that the regular grid mentioned above play an important role in the implement of PTAS. Several properties of the regular grid $\Delta_n(r)$ have been studied in the literature. In Bos \cite{Bos83},
the Lebesgue constant of $\Delta_n(r)$ is studied in the context of Lagrange interpolation and finite
element methods. Given a point $x\in \Delta_n$, Bomze, Gollowitzer and Yildirim \cite{BGY14} study a scheme to
find the closest point to $x$ on $\Delta_n(r)$ with respect to certain norms (including $\ell_q$-norms for finite
$q$). Furthermore, for any quadratic polynomial $f$ and $r\in \mathbb{N}$, Sagol and Yildirim \cite{SY13} and
Yildirim \cite{Y12} consider the upper bound on $f_{\rm min}$ defined by
${\rm min}_{x\in \cup_{k=0}^r\Delta_n(k)}f(x)$, and analyze the error bound. The following error bounds are known for the approximation $f_{\Delta}^{(r)}$ of $f$.
\begin{theorem}\label{Th0}

(i) \cite{BK02} For any quadratic polynomial $f$ and $r\in \mathbb{N}$, one has
$$
f_{\Delta}^{(r)}-f_{\rm min}\leq \frac{1}{r+2}(f_{\rm max}-f_{\rm min}),
$$
where $f_{\rm max}$ is the maximum value of the objective in \eqref{StQPmod}.

(ii) \cite{KLP06} For any homogeneous polynomial $f$ of degree $d\geq 2$ in \eqref{StQPmod} and $r\in \mathbb{N}\backslash\{0\}$, one has
$$
f_{\Delta}^{(d+r-2)}-f_{\rm min}\leq (1-w_r(d))\binom{2d-1}{d}d^d(f_{\rm max}-f_{\rm min}),
$$
where $w_r(d)=\frac{(d+r)!}{r!(d+r)^d}$.
\end{theorem}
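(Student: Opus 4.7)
The plan is to prove both bounds via the same probabilistic construction, with a combinatorial refinement in (ii). Let $x^{\ast}\in\Delta_n$ be a global minimizer of $f$, so that $f(x^{\ast})=f_{\rm min}$. For a positive integer $N$ (chosen as $r+2$ in (i) and $d+r$ in (ii)), I would draw independent categorical samples $\xi_1,\dots,\xi_N$ on $\{1,\dots,n\}$ with $\mathbb{P}(\xi_k=i)=x^{\ast}_i$, let $N_i=|\{k:\xi_k=i\}|$, and set $Y=N^{-1}(N_1,\dots,N_n)$. Then $NY\in\mathbb{N}^n$ and $\sum_iY_i=1$ hold almost surely, hence $Y\in\Delta_n(N-2)$ pointwise. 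Since the minimum over a finite set is at most any expectation over it, $f_{\Delta}^{(N-2)}\leq\mathbb{E}[f(Y)]$, and it suffices to bound $\mathbb{E}[f(Y)]-f_{\rm min}$.

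For part (i), with $f(x)=\sum_{i,j}b_{ij}x_ix_j$ and $N=r+2$, the multinomial second moments give $\mathbb{E}[Y_iY_j]=\frac{N-1}{N}x^{\ast}_ix^{\ast}_j$ for $i\neq j$ and $\mathbb{E}[Y_i^2]=\frac{N-1}{N}(x^{\ast}_i)^2+\frac{1}{N}x^{\ast}_i$, which collapses to
\[
\mathbb{E}[f(Y)]=\frac{N-1}{N}f(x^{\ast})+\frac{1}{N}\sum_{i=1}^n b_{ii}x^{\ast}_i.
\]
Since $b_{ii}=f(e_i)\leq f_{\rm max}$ and $\sum_ix^{\ast}_i=1$, the second summand is a convex combination of vertex values bounded above by $f_{\rm max}$. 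Substituting $N=r+2$ and rearranging yields the stated inequality.

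For part (ii), with $f$ homogeneous of degree $d$ and $N=d+r$, expand $f(x)=\sum_{|\alpha|=d}c_\alpha x^\alpha$ and note that $\mathbb{E}[Y^\alpha]=N^{-d}\mathbb{E}[\prod_i N_i^{\alpha_i}]$. Expressing the ordinary powers $N_i^{\alpha_i}$ as sums of falling factorials $(N_i)_k$ via Stirling numbers of the second kind, and using the exact multinomial identity $\mathbb{E}[\prod_i(N_i)_{\gamma_i}]=(N)_{|\gamma|}(x^{\ast})^\gamma$, the ``all-distinct'' term $|\gamma|=d$ contributes exactly $(N)_d N^{-d}f(x^{\ast})=w_r(d)\,f_{\rm min}$. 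The remaining terms, of total weight $1-w_r(d)$, evaluate to values of lower-degree homogeneous pieces of $f$ at $x^{\ast}$, and are controlled through the standard change of basis between the monomial and Bernstein bases of a degree-$d$ polynomial on $\Delta_n$, which contributes the factor $\binom{2d-1}{d}d^d$ in exchange for $(f_{\rm max}-f_{\rm min})$. Collecting gives $\mathbb{E}[f(Y)]\leq f_{\rm min}+(1-w_r(d))\binom{2d-1}{d}d^d(f_{\rm max}-f_{\rm min})$.

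The main technical obstacle is part (ii), specifically the sharp constant $\binom{2d-1}{d}d^d$ arising from the monomial-to-Bernstein change of basis. One must show that for any degree-$d$ homogeneous polynomial $f$ on $\Delta_n$, the monomial coefficients $c_\alpha$ satisfy a bound of the form $\max_{|\alpha|=d}|c_\alpha|\leq\binom{2d-1}{d}d^d\cdot(f_{\rm max}-f_{\rm min})/d!$ after shifting $f$ by a constant so that $f_{\rm min}=0$; this is precisely what packages the residual falling-factorial contributions into the stated form. Once this inverse-transform estimate is in hand, pairing it with the falling-factorial calculation completes the proof. The probabilistic framework is identical in both parts; the extra work in (ii) is purely polynomial-approximation-theoretic.
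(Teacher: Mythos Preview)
The paper does not prove this theorem at all: it is quoted as background from \cite{BK02} and \cite{KLP06}, with no argument given. So there is no ``paper's own proof'' to compare against.

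On the merits: your proof of part (i) is correct. The multinomial computation $\mathbb{E}[f(Y)]=\tfrac{N-1}{N}f(x^\ast)+\tfrac{1}{N}\sum_i b_{ii}x^\ast_i$ with $N=r+2$, together with $b_{ii}=f(e_i)\le f_{\max}$, gives exactly the claimed bound. This is in fact the argument of \cite{KLS14}, which the paper cites as an alternative to the original LP-duality proof of Bomze and de~Klerk.

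For part (ii), your plan has the right skeleton (multinomial sampling, falling-factorial identity, leading term $w_r(d)f_{\min}$), but the crucial step---packaging the lower-order falling-factorial remainders into $(1-w_r(d))\binom{2d-1}{d}d^d(f_{\max}-f_{\min})$---is only asserted, not carried out. The specific coefficient inequality you write, $\max_{|\alpha|=d}|c_\alpha|\le \binom{2d-1}{d}d^d(f_{\max}-f_{\min})/d!$, is not quite the lemma that does the job in \cite{KLP06}; there the two factors arise separately: $\binom{2d-1}{d}$ bounds the spread of Bernstein coefficients of $f$ in terms of $f_{\max}-f_{\min}$, and $d^d$ enters through the action of the Bernstein operator on the grid. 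As you yourself flag, this is the real obstacle, and your proposal leaves it open. So part (ii) is a reasonable outline but not a proof.
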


The above results imply the existence of a PTAS for the corresponding optimization problems. This naturally raises the question of whether the same holds for StBQP. As far as we know, there are very few PTASs for solving standard bi-quadratic optimization problems. Indeed, the appearance of Cartesian product of two simplices in (\ref{polyOpt}) results in that the designing PTAS becomes a more complex task, which also differs from the problems considered in \cite{BK02,KLP06}.

In this paper, we focus on approximately solving StBQP, and present a quality of approximation which shows the existence of a PTAS for solving StBQP. Moreover, we prove that the proposed approach can be extended to solving standard multi-quadratic optimization problem. Some numerical examples are provided to illustrate our approach.

Some words about the notation. $\Re^n$ denotes the real Euclidean space of column vectors of length $n$, and $\mathbb{N}^n$ denotes the set of all nonnegative integer vectors of length $n$. For $\alpha=(\alpha_1,\ldots,\alpha_n)^\top\in \mathbb{N}^n$ and $d\in \mathbb{N}$, we define $|\alpha|=\alpha_1+\alpha_2+\cdots +\alpha_n$,  $\alpha!=\alpha_1!\alpha_2!\cdots \alpha_n!$ and $I(n,d)=\{\alpha\in \mathbb{N}^n~|~|\alpha|=d\}$. For two vectors $\alpha,\beta\in \Re^n$, the inequality $\alpha\leq \beta$ is coordinate-wise and means that $\alpha_i\leq \beta_i$ for every $i$.  Denote by $\mathcal{T}_{n,m}^{d,l}$ the set of all $(d,l)$-th order $\underset{d}{\underbrace{n\times\cdots\times n}} \times \underset{l}{\underbrace{m\times\cdots\times m}}$-dimensional real rectangular tensors, and $\mathcal{S}_{n,m}^{d,l}$ the set of all partially symmetric tensors in $\mathcal{T}_{n,m}^{d,l}$. Here, the meaning of partially symmetric is similar to that in (\ref{partsymm}). Specially, if $d=l=2$, $\mathcal{T}_{n,m}^{d,l}$ and $\mathcal{S}_{n,m}^{d,l}$ are simply written as $\mathcal{T}_{n,m}$ and $\mathcal{S}_{n,m}$, respectively. For $x\in \Re^n$, denoted by $x^d$ the tensor $(x_{i_1}\cdots x_{i_d})_{1\leq i_1,\ldots,i_d\leq n}$, which is a $d$-th order $n$-dimensional square tensor. For given $\mathcal{X}\in \mathcal{T}_n^d$ (the set of all $d$-th order $n$-dimensional square tensors) and $\mathcal{Y}\in \mathcal{T}_m^l$, denoted by $\mathcal{X}\otimes \mathcal{Y}$ the $(d,l)$-th order $\underset{d}{\underbrace{n\times\cdots\times n}} \times \underset{l}{\underbrace{m\times\cdots\times m}}$-dimensional real rectangular tensor with entries $\mathcal{X}_{i_1\cdots i_d}\mathcal{Y}_{j_1\cdots j_l}$ for $1\leq i_1,\ldots,i_d\leq n$ and $1\leq j_1,\ldots,j_l\leq m$. Denote by $\mathcal{E}$ the tensor of all ones in an appropriate tensor space. We denote by $\mathcal{A}\bullet\mathcal{B}$ the Frobenius inner product for tensors $\mathcal{A,B}\in \mathcal{T}_{n,m}^{d,l}$, i.e.,
$$
\mathcal{A}\bullet\mathcal{B}=\sum_{1\leq i_1,\ldots,i_d\leq n,}\sum_{1\leq j_1,\ldots,j_l\leq m}a_{i_1\ldots i_dj_1\ldots j_l}b_{i_1\ldots i_dj_1\ldots j_l}.
$$
 Let $\mathcal{H}_{n,m}^{d,l}$ denote the set of all bi-homogeneous polynomial of degrees $d\geq 2$ and $l\geq 2$, with respect to the variables $x\in \Re^n$ and $y\in \Re^m$ respectively.

\section{Preliminaries}\label{Prelim}
Recall that a set $S\subseteq \Re^n$ is said to be convex if whenever $x,y\in S$ and $t\in [0,1]$ we have $tx+(1-t)y\in S$. A set $K\subseteq \Re^n$ is said to be convex cone, if $K$ is convex and whenever $x\in K$ and $t\geq 0$ we have $tx\in K$. Let $V$ be a finite dimensional vector space equipped with a inner product $\langle \cdot,\cdot\rangle$, and let $K$ be a convex cone in $V$. Denote
$$K^*=\{w\in V~|~\langle w,k\rangle\geq 0,\forall ~k\in K \},$$
which is said to be the positive dual cone of $K$.

For a given cone $K$ and its dual cone $K^*$, we define the primal and dual pair of
conic linear programs:
$$
\begin{array}{rl}
~~~~~~~({\rm P}) ~~~~~p^*:=\inf& C\bullet X \\
{\rm s.t.}&X\in K,~ A_i\bullet X=b_i, i=1,2,\ldots,m
\end{array}
$$
and
$$\begin{array}{rl}
({\rm D}) ~~~~~d^*:=\sup&b^\top y\\
{\rm s.t.}&y\in \Re^m, C-\sum_{i=1}^m y_i A_i\in K^*.
\end{array}
$$
The following well-known conic duality theorem, see, e.g., \cite{R01}, gives the duality relations
between (P) and (D).
\begin{theorem}\label{ConicTh} (Conic duality theorem). If there exists an interior feasible solution
$X^0\in {\rm int} (K)$ of (P), and a feasible solution of (D), then $p^*=d^*$ and the supremum
in (D) is attained. Similarly, if there exist $y^0\in \Re^m$ with $C-\sum_{i=1}^m y^0_i A_i\in {\rm int}(K^*)$ and a feasible solution of (P), then $p^*=d^*$ and the infimum in (P) is attained.
\end{theorem}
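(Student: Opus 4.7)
I will describe the argument for the first assertion; the second follows by a symmetric argument after interchanging the roles of $(\mathrm{P})$ and $(\mathrm{D})$ (and using $K^{**}=K$, which holds once one assumes, as is standard, that $K$ is closed). The plan is the classical two-step pattern: a short algebraic \emph{weak duality} inequality $p^*\ge d^*$, and then a \emph{strong duality} argument via a separating hyperplane theorem, in which the interior-point hypothesis is used precisely to rule out a vertical separator.

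\emph{Weak duality.} For any primal-feasible $X$ and dual-feasible $y$, setting $S:=C-\sum_{i=1}^m y_iA_i\in K^*$,
\[
C\bullet X-b^\top y=S\bullet X+\sum_{i=1}^m y_i(A_i\bullet X-b_i)=S\bullet X\ge 0,
\]
by the equality constraints of $(\mathrm{P})$ and the defining property of $K^*$. Taking the infimum over $X$ and the supremum over $y$ yields $p^*\ge d^*$, and in particular $p^*$ is finite under the stated hypotheses.

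\emph{Strong duality via separation.} I would form the convex set
\[
M:=\{(A_1\bullet X-b_1,\ldots,A_m\bullet X-b_m,\,C\bullet X)\,:\,X\in K\}\subset\Re^{m+1}.
\]
By the definition of $p^*$, no point of $M$ lies in the open convex set $L:=\{0\}\times(-\infty,p^*)\subset\Re^{m+1}$, so a separating hyperplane theorem supplies $(y,\lambda)\in\Re^m\times\Re$ with $(y,\lambda)\ne 0$ and
\[
\sum_{i=1}^m y_i(A_i\bullet X-b_i)+\lambda(C\bullet X)\ge \lambda s,\qquad\forall\,X\in K,\;s<p^*.
\]
Letting $s\to-\infty$ forces $\lambda\ge 0$. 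Replacing $X\in K$ by $tX\in K$ and letting $t\to+\infty$ enforces $\sum_i y_i(A_i\bullet X)+\lambda(C\bullet X)\ge 0$ for every $X\in K$, equivalently $\lambda C-\sum_i y_iA_i\in K^*$; taking $X=0\in K$ then gives $\lambda p^*\le b^\top y$.

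\emph{Non-verticality, the main obstacle.} The crucial step is to exclude $\lambda=0$. Suppose $\lambda=0$; then $\sum_i y_i(A_i\bullet X-b_i)\ge 0$ for every $X\in K$. Because $X^0\in\mathrm{int}(K)$ and $A_i\bullet X^0=b_i$, for every $Z\in V$ there is $\varepsilon>0$ with $X^0\pm\varepsilon Z\in K$, and substitution yields $\pm\varepsilon\sum_i y_i(A_i\bullet Z)\ge 0$, whence $\sum_i y_iA_i=0$ and $b^\top y=y^\top AX^0=0$. Any such $(y,0)$ is a redundancy direction in the constraint data: after a routine reduction to a linearly independent subsystem of the $A_i$ (which alters neither the feasible sets nor the optimal values of $(\mathrm{P})$ or $(\mathrm{D})$), no non-trivial such direction remains and we conclude $\lambda>0$. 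Rescaling to $\lambda=1$, the separation inequality exhibits $y$ as dual feasible with $b^\top y\ge p^*$; combined with weak duality this delivers $b^\top y=p^*=d^*$ and shows that the supremum in $(\mathrm{D})$ is attained at this $y$.
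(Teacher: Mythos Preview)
The paper does not prove this theorem: it is quoted as the well-known conic duality theorem, with a reference to Renegar~\cite{R01}, and is invoked only as a black box (in Theorem~\ref{DualityforCOP} and at the start of Section~\ref{Sec4}). So there is no in-paper argument to compare your separation proof against.

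For what it is worth, your outline is the standard route and is essentially sound. Two small points deserve tightening. First, with your chosen orientation of the separating hyperplane one actually obtains $\lambda C+\sum_i y_iA_i\in K^*$ and $-b^\top y\ge \lambda p^*$, not the signed versions you wrote; this is harmless, since replacing $y$ by $-y$ after normalizing $\lambda=1$ yields a dual-feasible point with value $\ge p^*$, and weak duality then gives equality and attainment. Second, your handling of the $\lambda=0$ case via a ``routine reduction to linearly independent $A_i$'' is legitimate but should be justified in a line: primal feasibility of $X^0$ forces any linear dependence among the $A_i$ to be mirrored by the same dependence among the $b_i$, so deleting redundant equations changes neither the primal nor the dual feasible set or optimal value; after that reduction, $\sum_i y_iA_i=0$ forces $y=0$, contradicting $(y,\lambda)\neq 0$. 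Your parenthetical remark that the symmetric half of the statement needs $K$ closed (so that $K^{**}=K$) is correct and is tacitly assumed in the applications the paper makes of the theorem.
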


We now introduce the concept of copositive rectangular tensers, which is a generalization of the concept of copositive square tensors presented in \cite{Q13} and studied in \cite{SQ14}.

\begin{definition}\label{copositivedef}
Let $\mathcal{G}=(g_{i_1\ldots i_dj_1\ldots j_l})\in \mathcal{S}_{n,m}^{d,l}$. We say that $\mathcal{G}$ is a (resp. strictly) copositive tensor, if $$\sum_{i,j=1}^n\sum_{k,l=1}^mg_{i_1\ldots i_dj_1\ldots j_l}x_{i_1}\cdots x_{i_d}y_{j_1}\cdots y_{j_l}\geq 0~({\rm resp. ~}>0)$$ for any $(x, y)\in \Re_+^n\times \Re_+^m~({\rm resp.~} (x,y)\in(\Re_+^n\backslash\{0\})\times (\Re_+^m\backslash\{0\}))$.
\end{definition}

Denote by $\mathscr{C}_{n,m}^{d,l}$ the set of all copositive tensors in $\mathcal{S}_{n,m}^{d,l}$. It is easy to see that $\mathscr{C}_{n,m}^{d,l}$ is a closed convex cone in the vector space $\mathcal{T}_{n,m}^{d,l}$. The copositive cone in $\mathcal{S}_{n,m}$ is simply defined by $\mathscr{C}_{n,m}$.

\begin{proposition}\label{Copprop}
Let $\mathcal{A}\in \mathcal{S}_{n,m}$. Then

(i) $\mathcal{A}$ is copositive, if and only if $p_{\mathcal{A}}^{\rm min}\geq 0$.

(ii) $\mathcal{A}$ is strictly copositive, if and only if $p_{\mathcal{A}}^{\rm min}> 0$.
\end{proposition}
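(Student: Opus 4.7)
The plan is to exploit the bi-homogeneity of the polynomial $p_{\mathcal{A}}$: since the expression is homogeneous of degree two separately in $x$ and in $y$, for every nonzero $x\in\Re_+^n$ and $y\in\Re_+^m$ one has the scaling identity
\[
p_{\mathcal{A}}(x,y)=\bigl(\mathbf{1}^\top x\bigr)^{2}\bigl(\mathbf{1}^\top y\bigr)^{2}\,p_{\mathcal{A}}(\tilde x,\tilde y),
\]
where $\tilde x:=x/(\mathbf{1}^\top x)\in\Delta_n$ and $\tilde y:=y/(\mathbf{1}^\top y)\in\Delta_m$. This identity will shuttle information between the simplex formulation that defines $p_{\mathcal{A}}^{\min}$ and the non-negative orthant formulation that defines copositivity.

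For part (i), the forward direction is immediate: the inclusion $\Delta_n\times\Delta_m\subset\Re_+^n\times\Re_+^m$ shows that copositivity of $\mathcal{A}$ forces $p_{\mathcal{A}}(x,y)\geq 0$ on the entire feasible set of (\ref{polyOpt}), whence $p_{\mathcal{A}}^{\min}\geq 0$. For the converse, assume $p_{\mathcal{A}}^{\min}\geq 0$; for any nonzero $x\in\Re_+^n$ and nonzero $y\in\Re_+^m$ the scaling identity together with $p_{\mathcal{A}}(\tilde x,\tilde y)\geq p_{\mathcal{A}}^{\min}\geq 0$ yields $p_{\mathcal{A}}(x,y)\geq 0$. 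The degenerate cases $x=0$ or $y=0$ are trivial because $p_{\mathcal{A}}$ then vanishes. This establishes copositivity per Definition \ref{copositivedef}.

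For part (ii), the forward direction uses compactness: $\Delta_n\times\Delta_m$ is compact and $p_{\mathcal{A}}$ is continuous, so $p_{\mathcal{A}}^{\min}$ is attained at some $(x^{*},y^{*})\in\Delta_n\times\Delta_m$; since every point of $\Delta_n\times\Delta_m$ is a pair of nonzero non-negative vectors, strict copositivity gives $p_{\mathcal{A}}^{\min}=p_{\mathcal{A}}(x^{*},y^{*})>0$. Conversely, if $p_{\mathcal{A}}^{\min}>0$, then for every $(x,y)\in(\Re_+^n\setminus\{0\})\times(\Re_+^m\setminus\{0\})$ the scaling identity gives
\[
p_{\mathcal{A}}(x,y)\geq\bigl(\mathbf{1}^\top x\bigr)^{2}\bigl(\mathbf{1}^\top y\bigr)^{2}\,p_{\mathcal{A}}^{\min}>0,
\]
which is precisely strict copositivity.

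There is no genuine obstacle here: the argument is a direct homogenization-of-degree-$(2,2)$ calculation plus an appeal to compactness for the strict case. The only subtlety worth flagging is that one must treat the boundary points $x=0$ and $y=0$ separately in (i), which is why Definition \ref{copositivedef} allows them, whereas in (ii) they are explicitly excluded so that the strict inequality is meaningful.
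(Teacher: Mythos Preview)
Your proof is correct and follows essentially the same approach as the paper: both arguments rest on the normalization $\tilde x=x/(\mathbf{1}^\top x)\in\Delta_n$, $\tilde y=y/(\mathbf{1}^\top y)\in\Delta_m$ and the bi-homogeneity of $p_{\mathcal{A}}$ to pass between the orthant and the simplex product. The paper's proof is simply a one-line version of yours, leaving the compactness argument for part (ii) and the handling of the degenerate cases in part (i) implicit.
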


\begin{proof}
Since $\bar x:=x/\sum_{i=1}^nx_i\in \Delta_n$ and $\bar y:=y/\sum_{j=1}^my_j\in \Delta_m$ for any $x\in \Re_+^n\backslash\{0\}$ and $y\in \Re_+^m\backslash\{0\}$, the desired results follow from Definition \ref{copositivedef}.
\qed\end{proof}

\begin{proposition}\label{Copdulity} For any given positive integers $n,m,d,l\in \mathbb{N}$ with $d,l\geq 2$, one has
$$
\mathscr{C}_{n,m}^{d,l}=(\mathscr{B}_{n,m}^{d,l})^{*},
$$
where $\mathscr{B}_{n,m}^{d,l}={\rm conv}\{x^d\otimes y^l~|~x\in \Re_+^n,~y\in \Re_+^m\}$ which is called the cone of all partially symmetric completely positive tensors in $\mathcal{T}_{n,m}^{d,l}$.
\end{proposition}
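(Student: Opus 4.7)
The plan is to reduce the dual-cone condition to an inequality tested on the generators of $\mathscr{B}_{n,m}^{d,l}$, and then to recognise that inequality as the defining inequality of copositivity. By construction $\mathscr{B}_{n,m}^{d,l}$ is the convex (conic) hull of the rank-one tensors $x^d\otimes y^l$ with $x\in\Re_+^n$ and $y\in\Re_+^m$. Since the Frobenius inner product is linear in its second argument, a tensor $\mathcal{G}\in\mathcal{S}_{n,m}^{d,l}$ satisfies $\mathcal{G}\bullet\mathcal{B}\geq 0$ for every $\mathcal{B}\in\mathscr{B}_{n,m}^{d,l}$ if and only if it does so for every generator, i.e.
\[
\mathcal{G}\bullet(x^d\otimes y^l)\geq 0,\qquad \forall\, x\in\Re_+^n,\ y\in\Re_+^m.
\]

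Next I would simply compute this inner product. From the definitions of $\otimes$ and $\bullet$ given in the notation paragraph,
\[
\mathcal{G}\bullet(x^d\otimes y^l)=\sum_{i_1,\ldots,i_d=1}^{n}\sum_{j_1,\ldots,j_l=1}^{m} g_{i_1\ldots i_d j_1\ldots j_l}\,x_{i_1}\cdots x_{i_d}\,y_{j_1}\cdots y_{j_l},
\]
which is exactly the polynomial form whose nonnegativity on $\Re_+^n\times\Re_+^m$ defines copositivity of $\mathcal{G}$ in Definition~\ref{copositivedef}. Combining this with the previous step yields $\mathcal{G}\in(\mathscr{B}_{n,m}^{d,l})^{*}\iff \mathcal{G}\in\mathscr{C}_{n,m}^{d,l}$, proving the claimed equality.

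The only point requiring a moment of care is the choice of ambient space in which the polar is taken. Each generator $x^d\otimes y^l$ is already partially symmetric, so $\mathscr{B}_{n,m}^{d,l}\subseteq\mathcal{S}_{n,m}^{d,l}$, and by definition $\mathscr{C}_{n,m}^{d,l}\subseteq\mathcal{S}_{n,m}^{d,l}$ as well; thus taking the dual inside $\mathcal{S}_{n,m}^{d,l}$ lines the two cones up correctly without the need for any symmetrisation argument. There is no substantive obstacle here: the proposition is essentially an unfolding of the definitions, parallel to the classical duality between the copositive and completely positive matrix cones.
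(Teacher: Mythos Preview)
Your proof is correct and follows essentially the same approach as the paper: both arguments reduce membership in the dual cone to the inequality $\mathcal{G}\bullet(x^d\otimes y^l)\geq 0$ on the generators and then identify this with the defining condition of copositivity. Your presentation is slightly more compact (a single biconditional rather than two separate inclusions, the second by contradiction), and your explicit remark about the ambient space $\mathcal{S}_{n,m}^{d,l}$ is a nice addition that the paper leaves implicit.
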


\begin{proof}
Let $\mathcal{A}\in \mathscr{C}_{n,m}^{d,l}$. For any $x^d\otimes y^l\in\mathscr{B}_{n,m}$, it is obvious that
$$\mathcal{A}\bullet(x^d\otimes y^l)=\sum_{i_1,\ldots,i_d=1}^n\sum_{j_1,\ldots,j_l=1}^ma_{i_1\ldots i_dj_1\ldots j_l}x_{i_1}\cdots x_{i_d}y_{j_1}\cdots y_{j_l}\geq 0,$$
by Definition \ref{copositivedef}. Based upon this, we may know that $\mathcal{A}\bullet\mathcal{B}\geq 0$ for any $\mathcal{B}\in \mathscr{B}_{n,m}^{d,l}$, which implies that $\mathcal{A}\in (\mathscr{B}_{n,m}^{d,l})^*$. Hence,  $\mathscr{C}_{n,m}^{d,l}\subseteq (\mathscr{B}_{n,m}^{d,l})^*$.

Now we prove $(\mathscr{B}_{n,m}^{d,l})^*\subseteq \mathscr{C}^{d,l}_{n,m}$. Let $\mathcal{A}\in(\mathscr{B}_{n,m}^{d,l})^*$. Suppose that $\mathcal{A}\not\in\mathscr{C}_{n,m}^{d,l}$. Then there exist $\bar x\in \Re_+^n$ and $\bar y\in \Re_+^m$, such that
$$\sum_{i_1,\ldots,i_d=1}^n\sum_{j_1,\ldots,j_l=1}^ma_{i_1\ldots i_dj_1\ldots j_l}\bar x_{i_1}\cdots\bar x_{i_d}\bar y_{j_1}\cdots\bar y_{j_l}< 0.$$ Take $\bar {\mathcal{Z}}=\bar {x}^d\otimes\bar {y}^l$. Then $\bar {\mathcal{Z}}\in \mathscr{B}^{d,l}_{n,m}$. From the above expression, we see that $ \mathcal{A}\bullet\bar {\mathcal{Z}}<0$, which is a contradiction. Therefore, $\mathcal{A}\in\mathscr{C}^{d,l}_{n,m}$, which implies that $(\mathscr{B}_{n,m}^{d,l})^*\subseteq \mathscr{C}^{d,l}_{n,m}$.
\qed\end{proof}

We consider the following conic optimization problem
\begin{equation}\label{conicopt}
\begin{array}{rl}
v_{\rm min}^{\rm p}={\rm min}& \mathcal{A}\bullet \mathcal{Z}\\
{\rm s.t.}&\mathcal{Z}\in \mathscr{B}_{n,m}\\
&\mathcal{E}\bullet \mathcal{Z}=1
\end{array}
\end{equation}
with $\mathscr{B}_{n,m}=\mathscr{B}_{n,m}^{2,2}$, whose dual problem is
\begin{equation}\label{duconicopt}
\begin{array}{rl}
v_{\rm max}^{\rm d}=\underset{\lambda\in \Re}{\rm max}& \lambda\\
{\rm s.t.}&\mathcal{A}-\lambda \mathcal{E}\in \mathscr{C}_{n,m}.
\end{array}
\end{equation}

It is clear that for every feasible pair $(x,y)$ of (\ref{polyOpt}), one has that $xx^\top \otimes yy^\top\in \mathscr{B}_{n,m}$ and $\mathcal{E}\bullet (xx^\top \otimes yy^\top)=1$. Hence, the problem (\ref{conicopt}) is a tensor program relaxation of (\ref{polyOpt}), which implies that $v_{\rm min}^{\rm p}\leq p_{\mathcal{A}}^{\rm min}$. However, the following theorem shows that this relaxation is exactly tight, and the solving (\ref{polyOpt}) can be converted equivalently to the solving (\ref{conicopt}).

\begin{theorem}\label{equality}
The bi-quadratic optimization \eqref{polyOpt} and conic optimization \eqref{conicopt} are equivalent, that is, \eqref{polyOpt} and \eqref{conicopt} have the same optimal value and one
optimal solution pair of \eqref{polyOpt} can be obtained from the
optimal solution of \eqref{conicopt}.
\end{theorem}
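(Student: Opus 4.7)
The plan is to establish the two directions separately: one inequality comes for free from the relaxation argument already noted before the theorem, while the reverse inequality is obtained by exploiting the explicit convex-hull structure of $\mathscr{B}_{n,m}$ from Proposition~\ref{Copdulity}. As a bonus of the argument, an optimal pair for \eqref{polyOpt} will fall out of any extreme-ray decomposition of an optimal $\mathcal{Z}^{*}$.

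For any feasible $(x,y)$ of \eqref{polyOpt}, the tensor $\mathcal{Z}=xx^{\top}\otimes yy^{\top}$ lies in $\mathscr{B}_{n,m}$ and satisfies $\mathcal{E}\bullet\mathcal{Z}=(\sum_i x_i)^{2}(\sum_k y_k)^{2}=1$, while $\mathcal{A}\bullet\mathcal{Z}=p_{\mathcal{A}}(x,y)$. This already gives $v^{\mathrm{p}}_{\min}\leq p^{\min}_{\mathcal{A}}$, as remarked in the text. For the reverse inequality, I would take an arbitrary feasible $\mathcal{Z}$ of \eqref{conicopt}. By the definition $\mathscr{B}_{n,m}=\mathrm{conv}\{xx^{\top}\otimes yy^{\top}\mid x\in\Re_{+}^{n},y\in\Re_{+}^{m}\}$ together with Carath\'eodory's theorem, we may write
\[
\mathcal{Z}=\sum_{t=1}^{T}\lambda_{t}\,u^{(t)}(u^{(t)})^{\top}\otimes v^{(t)}(v^{(t)})^{\top},\qquad \lambda_{t}\geq 0,\; u^{(t)}\in\Re_{+}^{n},\;v^{(t)}\in\Re_{+}^{m}.
\]
Terms with $u^{(t)}=0$ or $v^{(t)}=0$ contribute nothing and can be discarded, so I assume $\alpha_{t}:=\sum_{i}u^{(t)}_{i}>0$ and $\beta_{t}:=\sum_{k}v^{(t)}_{k}>0$ for each $t$. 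Setting $x^{(t)}=u^{(t)}/\alpha_{t}\in\Delta_{n}$ and $y^{(t)}=v^{(t)}/\beta_{t}\in\Delta_{m}$, one computes
\[
\mathcal{E}\bullet\mathcal{Z}=\sum_{t}\lambda_{t}\alpha_{t}^{2}\beta_{t}^{2}=1,\qquad
\mathcal{A}\bullet\mathcal{Z}=\sum_{t}\lambda_{t}\alpha_{t}^{2}\beta_{t}^{2}\,p_{\mathcal{A}}\!\bigl(x^{(t)},y^{(t)}\bigr).
\]
Hence $\mathcal{A}\bullet\mathcal{Z}$ is a convex combination of values of $p_{\mathcal{A}}$ on $\Delta_{n}\times\Delta_{m}$, so $\mathcal{A}\bullet\mathcal{Z}\geq p^{\min}_{\mathcal{A}}$. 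Taking the infimum over feasible $\mathcal{Z}$ yields $v^{\mathrm{p}}_{\min}\geq p^{\min}_{\mathcal{A}}$, and combined with the first direction this gives equality of optimal values.

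For solution recovery, suppose $\mathcal{Z}^{*}$ solves \eqref{conicopt} and admits a decomposition of the above form. Since every term in the convex combination satisfies $p_{\mathcal{A}}(x^{(t)},y^{(t)})\geq p^{\min}_{\mathcal{A}}=\mathcal{A}\bullet\mathcal{Z}^{*}$ and the weighted average equals $p^{\min}_{\mathcal{A}}$, each $(x^{(t)},y^{(t)})$ with $\lambda_{t}\alpha_{t}^{2}\beta_{t}^{2}>0$ must satisfy $p_{\mathcal{A}}(x^{(t)},y^{(t)})=p^{\min}_{\mathcal{A}}$, and is therefore an optimal pair for \eqref{polyOpt}.

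The argument is essentially bookkeeping once the convex-hull characterization from Proposition~\ref{Copdulity} is in hand; the only subtlety worth attention is the normalization, since the homogeneity of $p_{\mathcal{A}}$ is bi-quadratic in $x$ and bi-quadratic in $y$ and must be matched against the quartic scaling of $\mathcal{E}\bullet(uu^{\top}\otimes vv^{\top})$. This is exactly what makes the factor $\alpha_{t}^{2}\beta_{t}^{2}$ appear simultaneously in both sums and guarantees that the relaxation is tight. No harder issue is expected; in particular, no duality or interior-point assumption is needed to conclude the equality of optimal values.
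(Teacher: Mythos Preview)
Your proof is correct and follows essentially the same approach as the paper: decompose a feasible tensor of \eqref{conicopt} into rank-one terms from $\mathscr{B}_{n,m}$, normalize each pair onto $\Delta_n\times\Delta_m$, and use that $\mathcal{A}\bullet\mathcal{Z}$ is then a convex combination of values of $p_{\mathcal{A}}$ on the feasible set. Your version is marginally cleaner in that you bound $\mathcal{A}\bullet\mathcal{Z}$ from below by $p_{\mathcal{A}}^{\min}$ for \emph{every} feasible $\mathcal{Z}$ (and thereby conclude all positively weighted terms are optimal), whereas the paper works only at the optimum and extracts a single index achieving the minimum; but this is a stylistic rather than a substantive difference.
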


\begin{proof}
Let $\mathcal{Z}^{\ast}$ be an optimal solution of (\ref{conicopt}) with the objective value $v_{\rm min}^{\rm p}$. By the definition of $\mathscr{B}_{n,m}$, there exists a positive integer $t$ such that
$\mathcal{Z}^{\ast}=\sum_{k=1}^{t}(x^{(k)}(x^{(k)})^\top)\otimes (y^{(k)}(y^{(k)})^\top)$ with $x^{(k)}\in
\Re_+^n\backslash\{0\}$ and $y^{(k)}\in
\Re_+^m\backslash\{0\}$, $k=1,\ldots,t$. Let $\lambda_k=\left(\sum_{i=1}^{n}x_i^{(k)}\right)^2\left(\sum_{j=1}^{m}y_j^{(k)}\right)^2$. Then $\lambda_k>0$ for $k=1,2,\ldots, t$, as well as $\sum_{k=1}^{t}\lambda_k=1$ since $\mathcal{E}\bullet
\mathcal{Z}^{\ast}=1$. Moreover, it is easy to see that $\mathcal{Z}^{\ast}=\sum_{k=1}^{t}\lambda_k(\bar x^{(k)}(\bar x^{(k)})^\top)\otimes(\bar y^{(k)}(\bar y^{(k)})^\top)$, where $\bar x^{(k)}=x^{(k)}/\sum_{i=1}^nx^{(k)}_k\in \Delta_n$ and $\bar y^{(k)}=y^{(k)}/\sum_{j=1}^my^{(k)}_j\in \Delta_m$.
Since $v_{\rm
min}^{\rm p}=\mathcal{A}\bullet  \mathcal{Z}^{\ast}$,  it
follows that
$
v_{\rm min}^{\rm p}=\sum_{k=1}^{t}\lambda_k\mathcal{A}\bullet[(\bar x^{(k)}(\bar x^{(k)})^\top)\otimes(\bar y^{(k)}(\bar y^{(k)})^\top)],
$
which implies that there must exist an index, say $1$, such that
\begin{equation}\label{xbtw}
\mathcal{A}\bullet[(\bar x^{(1)}(\bar x^{(1)})^\top)\otimes(\bar y^{(1)}(\bar y^{(1)})^\top)]\leq v_{\rm
min}^{\rm p},
\end{equation}
 since $\lambda_k>0$ for $k=1,\ldots,t$ and $\sum_{k=1}^{t}\lambda_k=1$.  On the other hand, since $\bar x^{(1)}\in \Delta_n$ and $\bar y^{(1)}\in \Delta_m$, it is clear that
$$p^{\rm min}_{\mathcal{A}}\leq p_{\mathcal{A}}(\bar x^{(1)},\bar y^{(1)})=\mathcal{A}\bullet[(\bar x^{(1)}(\bar x^{(1)})^\top)\otimes(\bar y^{(1)}(\bar y^{(1)})^\top)],$$
which implies, together with (\ref{xbtw}) and the fact that $v_{\rm min}^{\rm p}\leq
p^{\rm min}_{\mathcal{A}}$, that $p^{\rm min}_{\mathcal{A}}=\mathcal{A}\bullet[(\bar x^{(1)}(\bar x^{(1)})^\top)\otimes(\bar y^{(1)}(\bar y^{(1)})^\top)]= v_{\rm min}^{\rm p}$. We obtain the desired result and complete the
proof.
\qed\end{proof}

For the linear tensor conic optimization problems (\ref{conicopt}) and (\ref{duconicopt}), by utilizing Theorem \ref{ConicTh}, we obtain the following duality result, which means, together with Theorem \ref{equality}, that there exist no polynomial time algorithms for solving (\ref{duconicopt}).

\begin{theorem}\label{DualityforCOP}
For the conic optimization problems \eqref{conicopt} and \eqref{duconicopt}, one has $v_{\rm min}^{\rm p}=v_{\rm max}^{\rm d}$.
\end{theorem}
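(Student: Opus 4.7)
The plan is to apply the Conic Duality Theorem (Theorem~\ref{ConicTh}) with the identifications $K=\mathscr{B}_{n,m}$, $K^{\ast}=\mathscr{C}_{n,m}$ (justified by Proposition~\ref{Copdulity}), $C=\mathcal{A}$, a single linear constraint $\mathcal{E}\bullet\mathcal{Z}=1$, and scalar dual variable $\lambda\in\Re$. Under this dictionary, (\ref{conicopt}) and (\ref{duconicopt}) are exactly the abstract pair (P)/(D), so it suffices to verify the hypothesis of the second clause of Theorem~\ref{ConicTh}, namely the existence of a strictly interior feasible dual point together with a feasible primal point.

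Primal feasibility is immediate: for any fixed $\hat x\in\Delta_n$, $\hat y\in\Delta_m$, the tensor $\mathcal{Z}_{0}=(\hat x\hat x^{\top})\otimes(\hat y\hat y^{\top})$ lies in $\mathscr{B}_{n,m}$ by construction and satisfies $\mathcal{E}\bullet\mathcal{Z}_{0}=(\sum_{i}\hat x_{i})^{2}(\sum_{k}\hat y_{k})^{2}=1$. For dual strict feasibility, note that $p_{\mathcal{A}}$ is continuous on the compact set $\Delta_{n}\times\Delta_{m}$, so $p_{\mathcal{A}}^{\rm min}$ is finite; choose any scalar $\lambda^{0}<p_{\mathcal{A}}^{\rm min}$. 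Then for every $(\bar x,\bar y)\in\Delta_{n}\times\Delta_{m}$ one has $p_{\mathcal{A}-\lambda^{0}\mathcal{E}}(\bar x,\bar y)=p_{\mathcal{A}}(\bar x,\bar y)-\lambda^{0}>0$, and hence by Proposition~\ref{Copprop}(ii) the tensor $\mathcal{A}-\lambda^{0}\mathcal{E}$ is strictly copositive.

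The remaining ingredient is the identification of strict copositivity with interiority in $\mathscr{C}_{n,m}$. If $\mathcal{G}\in\mathscr{C}_{n,m}$ is strictly copositive, compactness yields $c>0$ with $p_{\mathcal{G}}(\bar x,\bar y)\ge c$ on $\Delta_{n}\times\Delta_{m}$; a Cauchy--Schwarz estimate $|p_{\mathcal{H}}(\bar x,\bar y)|\le\|\mathcal{H}\|_{F}$ (valid because $\|\bar x\bar x^{\top}\otimes\bar y\bar y^{\top}\|_{F}\le 1$ on the simplices) shows that every $\mathcal{H}$ with $\|\mathcal{H}-\mathcal{G}\|_{F}<c$ remains strictly positive on $\Delta_{n}\times\Delta_{m}$, so by Proposition~\ref{Copprop}(ii) it stays in $\mathscr{C}_{n,m}$. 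Thus $\mathcal{A}-\lambda^{0}\mathcal{E}\in{\rm int}(\mathscr{C}_{n,m})$, Theorem~\ref{ConicTh} applies, and we conclude $v_{\rm min}^{\rm p}=v_{\rm max}^{\rm d}$ with the infimum in (\ref{conicopt}) attained.

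The only non-cosmetic step is this interior characterization of $\mathscr{C}_{n,m}$, but it reduces to a one-line compactness argument, so no serious obstacle is anticipated; the rest is a direct bookkeeping invocation of the conic duality framework already assembled in Section~\ref{Prelim}.
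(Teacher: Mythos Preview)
Your proof is correct and follows the same route as the paper: verify the second clause of Theorem~\ref{ConicTh} by exhibiting a primal feasible point and a strictly feasible dual point in ${\rm int}(\mathscr{C}_{n,m})$. The only difference is cosmetic---the paper secures dual strict feasibility by choosing $\bar\lambda$ so small that $\mathcal{A}-\bar\lambda\mathcal{E}$ has all positive entries (making strict copositivity and interiority in $\mathscr{C}_{n,m}$ immediate), whereas you choose $\lambda^{0}<p_{\mathcal{A}}^{\rm min}$ and then supply the compactness/Cauchy--Schwarz argument for interiority; your extra step is sound and in fact fills in a detail the paper leaves implicit.
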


\begin{proof}
In order to invoke Theorem \ref{ConicTh}, we have to show that there is a $\lambda\in \Re$ with $\mathcal{A}-\lambda \mathcal{E}\in  {\rm int}(\mathscr{B}_{n,m}^*)={\rm int}(\mathscr{C}_{n,m})$, and that there is a feasible solution of (\ref{conicopt}).

Take $\bar\lambda\in\Re$ such that $\bar {\mathcal{A}}:=\mathcal{A}-\bar \lambda \mathcal{E}$ is positive tensor, i.e., all entries of $\mathcal{A}-\bar \lambda \mathcal{E}$ are positive, which implies that $\bar {\mathcal{A}}$ is strictly copositive. Hence $\mathcal{A}-\bar \lambda \mathcal{E}\in {\rm int}(\mathscr{C}_{n,m})={\rm int}(\mathscr{B}_{n,m}^*)$.

On the other hand, by taking $\bar {\mathcal{Z}}=\frac{1}{n^2m^2}e^{(n)}(e^{(n)})^\top \otimes e^{(m)}(e^{(m)})^\top\in \mathscr{B}_{n,m}$, where $e^{(n)}$ and $e^{(m)}$ are the two vectors of all ones in $\Re^n$ and $\Re^m$ respectively, we may verify that $\mathcal{E}\bullet\bar {\mathcal{Z}}=1$, which means $\bar { \mathcal{Z}}$ is a feasible solution of (\ref{conicopt}). By Theorem \ref{ConicTh}, we obtain the desired result.
\qed\end{proof}

\section{Approximation of copositive tensor cones}\label{Sec3}

Copositive programming is a useful tool in dealing with all sorts of optimization problems. However, it is well-known that the problem of checking whether a symmetric matrix belongs to the cone of copositive matrices or not is co-NP-complete
\cite{MK87}. The appearance of copositive tensor in (\ref{duconicopt}) results in the considered problem becomes a more complex task, since copositive tensor hides more complex structures than matrix in terms of computational solvability. In this section, we focus attention on studying how to approximate the copositive tensor cone $\mathscr{C}_{n,m}$.

By virtue of $p_{\mathcal{G}}(x,y)$ in (\ref{polyOpt}) with $\mathcal{G}\in \mathcal{S}_{n,m}$, we define
\begin{equation}\label{Grszw}
 P_{\mathcal{G}}^{(s,r)}(x,y):=p_{\mathcal{G}}(x,y)\left(\sum_{i=1}^nx_i\right)^s\left(\sum_{j=1}^my_j\right)^r,
\end{equation}
where $s$ and $r$ are any given non-negative integers. We consider when $P_{\mathcal{G}}^{(s,r)}(x,y)$ has no negative coefficients. It is clear that the set of all partially symmetric tensors $\mathcal{G}$ satisfying this condition forms a convex cone in $\mathcal{S}_{n,m}$.

\begin{definition}\label{DKdef}
The convex cone $\mathscr{C}_{n,m}^{s,r}$ consists of the tensors in $\mathcal{S}_{n,m}$ for which $P_{\mathcal{G}}^{(s,r)}(x,y)$ in (\ref{Grszw}) has no negative coefficients.
\end{definition}
Obviously, these cones are contained in each other: $\mathscr{C}_{n,m}^{s,r}\subseteq \mathscr{C}_{n,m}^{s+1,r}\subseteq \mathscr{C}_{n,m}^{s+1,r+1}$ for all non-negative integers $s$ and $r$. The approximation of $\mathscr{C}_{n,m}^{s,r}$ to $\mathscr{C}_{n,m}$ is essentially by examination of the proof of the following theorem, which is a bi-quadratic version of the famous theorem of P\'{o}lya \cite{HLP52,Poly74} (see also Powers and Reznick \cite{PR01}).

\begin{theorem}\label{GenPoly}
Let $\mathcal{G}\in \mathcal{S}_{n,m}$ and $p_{\mathcal{G}}(x,y)$ be a bi-quadratic form defined in \eqref{polyOpt}. Suppose that $p_{\mathcal{G}}(x,y)$ is positive on the Cartesian product of two simplices $\Delta_n\times \Delta_m$, i.e., $p_{\mathcal{G}}^{\rm min}>0$. Then the polynomial $p_{\mathcal{G}}(x,y)(\sum_{i=1}^nx_i)^r(\sum_{j=1}^my_j)^s$ has non-negative coefficients for all sufficiently large integers $r$ and $s$.
\end{theorem}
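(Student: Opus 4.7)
The plan is to imitate the classical proof of P\'olya's theorem (see \cite{HLP52,Poly74,PR01}), but now carried out with two blocks of variables. First, I would expand $P_{\mathcal{G}}^{(r,s)}(x,y):=p_{\mathcal{G}}(x,y)\bigl(\sum_{i=1}^nx_i\bigr)^r\bigl(\sum_{j=1}^my_j\bigr)^s$ via the multinomial theorem and read off the coefficient of an arbitrary monomial $x^\alpha y^\beta$ with $|\alpha|=r+2$ and $|\beta|=s+2$. Writing $g_{ijkl}$ for the entries of $\mathcal{G}$, a direct calculation gives
\[
c_{\alpha,\beta}=\sum_{i,j=1}^n\sum_{k,l=1}^m g_{ijkl}\binom{r}{\alpha-e_i-e_j}\binom{s}{\beta-e_k-e_l},
\]
with the usual convention that a shifted multinomial coefficient vanishes as soon as one of its entries is negative. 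Using $|\alpha|=r+2$ and $|\beta|=s+2$ to rewrite both binomial factors in terms of $\binom{r+2}{\alpha}$ and $\binom{s+2}{\beta}$, this reorganizes as
\[
c_{\alpha,\beta}=\frac{(r+2)!\,(s+2)!}{(r+2)(r+1)(s+2)(s+1)\,\alpha!\,\beta!}\sum_{i,j,k,l}g_{ijkl}\widetilde\alpha_{ij}\widetilde\beta_{kl},
\]
where $\widetilde\alpha_{ij}:=\alpha_i(\alpha_j-\delta_{ij})$ and $\widetilde\beta_{kl}:=\beta_k(\beta_l-\delta_{kl})$. Since the prefactor is strictly positive, the sign of $c_{\alpha,\beta}$ is controlled entirely by the bracketed tensor sum.

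Next, I would set $\bar x:=\alpha/(r+2)\in\Delta_n$ and $\bar y:=\beta/(s+2)\in\Delta_m$, then expand the Kronecker deltas hidden in $\widetilde\alpha_{ij}\widetilde\beta_{kl}$ to separate the bracketed sum into a main term plus three diagonal corrections:
\[
\sum_{i,j,k,l}g_{ijkl}\widetilde\alpha_{ij}\widetilde\beta_{kl}=(r+2)^2(s+2)^2\,p_{\mathcal{G}}(\bar x,\bar y)+R(\bar x,\bar y;r,s),
\]
where the remainder $R$ collects contributions of sizes $(r+2)(s+2)^2$, $(r+2)^2(s+2)$ and $(r+2)(s+2)$, each multiplied by a fixed continuous function of $(\bar x,\bar y)$ constructed from the repeated-index slices of $\mathcal{G}$ (typical terms are $\sum_i\bar x_i\sum_{k,l}g_{iikl}\bar y_k\bar y_l$ and its $x \leftrightarrow y$ analogue).

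Finally, I would invoke compactness. Because $p_{\mathcal{G}}$ is continuous and strictly positive on $\Delta_n\times\Delta_m$, it attains a strictly positive minimum $p_{\mathcal{G}}^{\rm min}>0$, while the continuous coefficient functions appearing in $R$ are bounded by a common constant $M=M(\mathcal{G})$ on the same compact set. Combining these yields
\[
\sum_{i,j,k,l}g_{ijkl}\widetilde\alpha_{ij}\widetilde\beta_{kl}\geq p_{\mathcal{G}}^{\rm min}(r+2)^2(s+2)^2-M(r+2)(s+2)(r+s+5),
\]
which is strictly positive as soon as $(r+2)(s+2)\,p_{\mathcal{G}}^{\rm min}>M(r+s+5)$. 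Since this inequality is uniform in the multi-indices $\alpha,\beta$ and manifestly holds whenever $r$ and $s$ are both sufficiently large, we conclude $c_{\alpha,\beta}\geq 0$ for every monomial $x^\alpha y^\beta$, which is the claim.

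The main obstacle is the bookkeeping at the second step: one must cleanly isolate every diagonal contribution produced by the $\delta_{ij}$ and $\delta_{kl}$ factors, track how each of them rescales when $\alpha,\beta$ are normalized into the simplices, and check that none of them can compete with the leading $r^2s^2$ term. Once this asymptotic comparison is established, the positivity of $p_{\mathcal{G}}$ on the compact product of simplices does the rest.
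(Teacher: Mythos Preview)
Your proposal is correct and follows essentially the same route as the paper's proof: both compute the coefficient of $x^\xi y^\zeta$ in $P_{\mathcal{G}}^{(r,s)}$ via the multinomial expansion, recognize that (up to a positive multinomial prefactor) it equals a ``perturbed'' value of $p_{\mathcal{G}}$ at the rational grid point $(\xi/(r+2),\zeta/(s+2))\in\Delta_n\times\Delta_m$, and then use compactness to force positivity for large $r,s$. The only difference is packaging: the paper encodes the falling-factorial structure $\widetilde\alpha_{ij}=\alpha_i(\alpha_j-\delta_{ij})$ into an auxiliary function $\phi(x,y,t,s)$ with $\phi(x,y,0,0)=p_{\mathcal{G}}(x,y)$ and invokes uniform continuity of $\phi$ on $\Delta_n\times\Delta_m\times[0,1]^2$, whereas you expand the Kronecker deltas by hand, isolate the leading $(r+2)^2(s+2)^2\,p_{\mathcal{G}}(\bar x,\bar y)$ term, and bound the lower-order diagonal corrections explicitly. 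Your version has the minor advantage of yielding an effective threshold $(r+2)(s+2)\,p_{\mathcal{G}}^{\rm min}>M(r+s+5)$, while the paper's $\phi$-formulation generalizes more transparently to higher partial degrees; but the underlying argument is the same.
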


\begin{proof}
In terms of $I(n,2)$ and $I(m,2)$, we rewrite $p_{\mathcal{G}}(x,y)$ in (\ref{polyOpt}) as
\begin{equation}\label{bpxy}
p_{\mathcal{G}}(x,y)=\sum_{\alpha\in I(n,2),\beta\in I(m,2)}\bar p_{\alpha\beta}\frac{x^\alpha y^\beta}{\alpha!\beta!},
\end{equation}
that is, $\bar p_{\alpha\beta}=p_{\alpha\beta}\alpha!\beta!$. Define
$$
\phi(x,y,t,s)=\sum_{\alpha\in I(n,2),\beta\in I(m,2)}\bar p_{\alpha\beta}\frac{(x_1)_t^{\alpha_1}\cdots (x_n)_t^{\alpha_n} (y_1)_s^{\beta_1}\cdots (y_m)_s^{\beta_m}}{\alpha!\beta!},
$$
where $(a)_t^q:=a(a-t)\cdots (a-(q-1)t)$ with $(a,t,q)\in \Re\times \Re_+\times \mathbb{N}$.
It is clear that $\phi(x,y,0,0)=p(x,y)$, and $\phi$ is continuous in $\Delta_n\times \Delta_m\times [0,1]\times [0,1]$. Consequently, by the given condition, there exist positive numbers $\mu$ and $\bar \varepsilon\in [0,1]$, such that
\begin{equation}\label{phPos}
\phi(x,y,t,s)>\phi(x,y,0,0)-(1/2)\mu\geq (1/2)\mu>0
\end{equation}
for $(x,y,t,s)\in \Delta_n\times \Delta_m\times [0,\bar\varepsilon]\times [0,\bar \varepsilon]$.

On the other hand, we also have that for any $s,r\geq 2$,
\begin{equation}\label{sumxy}
\left(\sum_{i=1}^nx_i\right)^{r-2}\left(\sum_{j=1}^my_j\right)^{s-2}=(r-2)!(s-2)!\sum_{\gamma\in I(n,r-2),\lambda\in I(m,s-2)}\frac{x^{\gamma}y^\lambda}{\gamma!\lambda!}.
\end{equation}
By multiplying (\ref{bpxy}) and (\ref{sumxy}), we obtain
$$
p_{\mathcal{G}}(x,y)\left(\sum_{i=1}^nx_i\right)^{r-2}\left(\sum_{j=1}^my_j\right)^{s-2}=(r-2)!(s-2)!\underset{\beta\in I(m,2)}{\sum_{\alpha\in I(n,2)}}\underset{~\lambda\in I(m,s-2)}{\sum_{~\gamma\in I(n,r-2)}}\bar p_{\alpha\beta}\frac{x^{\alpha+\gamma}y^{\beta+\lambda}}{\gamma!\lambda!\alpha!\beta!}.
$$
Denote $\alpha+\gamma=\xi$ and $\beta+\lambda=\zeta$. Then, it holds that $\xi\in I(n,r)$ and $\zeta\in I(m,s)$. Moreover, it is not difficult to see that
\begin{equation}\label{xypxy}
p_{\mathcal{G}}(x,y)\left(\sum_{i=1}^nx_i\right)^{r-2}\left(\sum_{j=1}^my_j\right)^{s-2}=\displaystyle(r-2)!(s-2)!\underset{\zeta\in I(m,s)}{\sum_{\xi\in I(n,r)}}\frac{x^{\xi}y^{\zeta}}{\xi!\zeta!}\underset{\alpha\leq \xi,\beta\leq \zeta}{\underset{~\beta\in I(m,2)}{\sum_{~\alpha\in I(n,2)}}}\bar p_{\alpha\beta}\binom{\xi}{\alpha}\binom{\zeta}{\beta},
\end{equation}
where $\binom{\xi}{\alpha}=\frac{\xi!}{\alpha!(\xi-\alpha)!}$ and $\binom{\zeta}{\beta}=\frac{\zeta!}{\beta!(\zeta-\beta)!}$. Since $\binom{b}{a}=0$ for any $a, b\in \mathbb{N}$ with $b< a$, by (\ref{xypxy}), we have
\begin{equation*}\label{xypxy1}
\begin{array}{l}
\displaystyle p_{\mathcal{G}}(x,y)\left(\sum_{i=1}^nx_i\right)^{r-2}\left(\sum_{j=1}^my_j\right)^{s-2}\\
~~~~=\displaystyle(r-2)!(s-2)!\underset{\zeta\in I(m,s)}{\sum_{\xi\in I(n,r)}}\frac{x^{\xi}y^{\zeta}}{\xi!\zeta!}\underset{~\beta\in I(m,2)}{\sum_{~\alpha\in I(n,2)}}\bar p_{\alpha\beta}\binom{\xi}{\alpha}\binom{\zeta}{\beta}\\
~~~~=\displaystyle(r-2)!(s-2)!r^2s^2\underset{\zeta\in I(m,s)}{\sum_{\xi\in I(n,r)}}\phi(\xi/r,\zeta/s,1/r,1/s)\frac{x^{\xi}y^{\zeta}}{\xi!\zeta!}.
\end{array}
\end{equation*}
Since, $\phi$ here is positive for sufficiently large $r$ and $s$  by (\ref{phPos}),  we obtain the desired result and complete the proof.
\qed\end{proof}

For every $\mathcal{M}\in \mathscr{C}_{n,m}$, we claim that $\mathcal{M}\in \mathscr{C}_{n,m}^{s,r}$ for sufficiently large $s$ and $r$. In fact, it is clear that $\bar {\mathcal{M}}_{t}:=\mathcal{M}+t\mathcal{E}$ is strictly copositive for any $t>0$, which implies $p_{\bar {\mathcal{M}}_{t}}^{\rm min}>0$ by Proposition \ref{Copprop}. Consequently, by Theorem \ref{GenPoly}, we know that $\bar {\mathcal{M}}_{t}$ lies in some cone $\mathscr{C}_{n,m}^{s,r}$ for $s,r$ sufficiently large. Since $\mathscr{C}_{n,m}^{s,r}$ is closed for any fixed $s$ and $r$, by letting $t\rightarrow 0$, we know that $\mathcal{M}\in \mathscr{C}_{n,m}^{s,r}$ for  sufficiently large $s$ and $r$.

For given $\alpha\in \Re^n$, define
\begin{equation}\label{DDaapp}
c(\alpha)=\left\{
\begin{array}{cl}
\displaystyle\frac{|\alpha|!}{\alpha!},&{\rm if}~\alpha\in \mathbb{N}^n,\\
0,&{\rm ~otherwise.}
\end{array}
\right.
\end{equation}
 Moreover, for given $\xi\in \Re^n$, $\zeta\in \Re^m$ and $\mathcal{G}\in \mathcal{S}_{n,m}$, define
\begin{equation}\label{Axzijkl}
\bar Q_{\xi\zeta}(\mathcal{G})=\sum_{i,j=1}^n\sum_{k,l=1}^mg_{ijkl}c(\xi^{(n)}(i,j))c(\zeta^{(m)}(k,l)),
\end{equation}
where $\xi^{(n)}(i,j)=\xi-e^{(n)}_i-e^{(n)}_j$ for $e^{(n)}_i, e^{(n)}_j\in \Re^n$, and $\zeta^{(m)}(k,l)=\zeta-e^{(m)}_k-e^{(m)}_l$ for $e^{(m)}_k, e^{(m)}_l\in \Re^m$. Here, $e^{(n)}_i$ is the $i$-th column vector of the identity matrix in $\Re^{n\times n}$.

By the multinomial law, it holds that
\begin{equation}\label{Gsrmulnomial}
\begin{array}{l}
P_{\mathcal{G}}^{(s,r)}(x,y)\\
=\displaystyle\left(\sum_{i,j=1}^n\sum_{k,l=1}^mg_{ijkl}x_{i}x_{j}y_{k}y_l\right)\left(\sum_{p=1}^nx_p\right)^s\left(\sum_{q=1}^my_q\right)^r\\
=\displaystyle\sum_{\alpha\in I(n,s)}\sum_{\beta\in I(m,r)}\sum_{i,j=1}^n\sum_{k,l=1}^mg_{ijkl}\frac{s!r!}{\alpha!\beta!}x_{i}x_{j}y_{k}y_lx^{\alpha}y^{\beta}\\
=\displaystyle\sum_{\alpha\in I(n,s)}\sum_{\beta\in I(m,r)}\sum_{i,j=1}^n\sum_{k,l=1}^mg_{ijkl}\frac{s!r!}{\alpha!\beta!}x^{\alpha+e^{(n)}_i+e^{(n)}_j}y^{\beta+e^{(m)}_k+e^{(m)}_l}\\
=\displaystyle\sum_{\xi\in I(n,s+2), \xi^{(n)}(i,j)\geq 0}~\sum_{\zeta\in I(m,r+2),\zeta^{(m)}(k,l)\geq 0}\left[\sum_{i,j=1}^n\sum_{k,l=1}^m g_{ijkl}\frac{s!r!}{\xi^{(n)}(i,j)!\zeta^{(m)}(k,l)!}\right]x^{\xi}y^{\zeta}\\
=\displaystyle\sum_{\xi\in I(n,s+2)}\sum_{\zeta\in I(m,r+2)}\left[\sum_{i,j=1}^n\sum_{k,l=1}^mg_{ijkl}c(\xi^{(n)}(i,j))c(\zeta^{(m)}(k,l))\right]x^{\xi}y^{\zeta}\\
=\displaystyle\sum_{\xi\in I(n,s+2)}\sum_{\zeta\in I(m,r+2)}\bar Q_{\xi\zeta}(\mathcal{G})x^{\xi}y^{\zeta},
\end{array}
\end{equation}
where the last second equality is due to (\ref{DDaapp}), and the last equality comes from (\ref{Axzijkl}). From (\ref{Gsrmulnomial}), we see that $\bar Q_{\xi\zeta}(\mathcal{G})$ as given by (\ref{Axzijkl}), are exactly the coefficients of $P_{\mathcal{G}}^{(s,r)}$.

For $\mathcal{G}\in \mathcal{S}_{n,m}$, denote by $A^{(i)}~(i=1,\ldots,n)$ the $m\times m$ symmetric matrix with entries being $g_{iikl}~(k,l=1,2,\ldots,m)$, $B^{(k)}~(k=1,\ldots,m)$ the $n\times n$ symmetric matrix with entries being $g_{ijkk}~(i,j=1,2,\ldots,n)$, and $C$ the $n\times m$ matrix with entries being $g_{iikk}~(i=1,\ldots,n,~k=1,2,\ldots,m)$. The following auxiliary result simplifies the expressions $\bar Q_{\xi\zeta}(\mathcal{G})$ considerably.
\begin{lemma}\label{lemm2.3}
Let $\mathcal{G}\in \mathcal{S}_{n,m}$, $\xi\in I(n,s+2)$ and $\zeta\in I(m,r+2)$. Let $\bar Q_{\xi\zeta}(\mathcal{G})$ be defined in \eqref{Axzijkl}. Then,
\begin{equation}\label{bargxzsr}
\begin{array}{r}
\displaystyle\bar Q_{\xi\zeta}(\mathcal{G})=\frac{c(\xi)c(\zeta)}{(s+2)(s+1)(r+2)(r+1)}\left((\mathcal{G}\xi\xi^\top)\bullet(\zeta\zeta^\top)-\sum_{i=1}^n\xi_i(\zeta^\top A^{(i)}\zeta)\right.\\
\displaystyle\left.-\sum_{k=1}^m\zeta_k(\xi^\top B^{(k)}\xi)+\xi^\top C\zeta\right),
\end{array}
\end{equation}
where $c(\cdot)$ is defined in \eqref{DDaapp}.
\end{lemma}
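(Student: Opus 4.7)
The plan is to reduce the lemma to a uniform algebraic identity by computing, for fixed multi-indices $\xi$ and $\zeta$, the ratios $c(\xi^{(n)}(i,j))/c(\xi)$ and $c(\zeta^{(m)}(k,l))/c(\zeta)$, and then substituting these into the defining sum \eqref{Axzijkl}.

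First I would split into the cases $i=j$ and $i\neq j$. Since $|\xi|=s+2$ and $|\xi^{(n)}(i,j)|=s$, the definition \eqref{DDaapp} gives $c(\xi)=(s+2)!/\xi!$ whereas $c(\xi^{(n)}(i,j))=s!/\xi^{(n)}(i,j)!$ whenever $\xi^{(n)}(i,j)\in\mathbb{N}^n$ and $0$ otherwise. A direct computation of $\xi^{(n)}(i,j)!$ shows that, uniformly over all cases,
\[
\frac{c(\xi^{(n)}(i,j))}{c(\xi)}=\frac{\xi_i\xi_j-\delta_{ij}\xi_i}{(s+2)(s+1)},
\]
where the key observation is that when $i=j$ and $\xi_i\in\{0,1\}$ (so $\xi-2e_i^{(n)}\notin\mathbb{N}^n$), the numerator $\xi_i(\xi_i-1)$ automatically vanishes; similarly, when $i\neq j$ and one of $\xi_i,\xi_j$ is zero, $\xi_i\xi_j$ vanishes. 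The same argument applied to $\zeta^{(m)}(k,l)$ yields $c(\zeta^{(m)}(k,l))/c(\zeta)=(\zeta_k\zeta_l-\delta_{kl}\zeta_k)/((r+2)(r+1))$.

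Next, I would insert these two ratios into \eqref{Axzijkl}, pulling the common factor $c(\xi)c(\zeta)/[(s+2)(s+1)(r+2)(r+1)]$ out of the sum. The remaining task is to expand
\[
\sum_{i,j=1}^n\sum_{k,l=1}^m g_{ijkl}\bigl(\xi_i\xi_j-\delta_{ij}\xi_i\bigr)\bigl(\zeta_k\zeta_l-\delta_{kl}\zeta_k\bigr)
\]
into its four constituent pieces. The first piece $\sum g_{ijkl}\xi_i\xi_j\zeta_k\zeta_l$ is recognized as $(\mathcal{G}\xi\xi^\top)\bullet(\zeta\zeta^\top)$. The $\delta_{ij}$ piece collapses the $(i,j)$ sum to a single diagonal index, producing $\sum_i \xi_i \sum_{k,l}g_{iikl}\zeta_k\zeta_l=\sum_{i=1}^n\xi_i(\zeta^\top A^{(i)}\zeta)$ by definition of $A^{(i)}$. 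Symmetrically, the $\delta_{kl}$ piece gives $\sum_{k=1}^m\zeta_k(\xi^\top B^{(k)}\xi)$, and the mixed $\delta_{ij}\delta_{kl}$ piece gives $\sum_{i,k}g_{iikk}\xi_i\zeta_k=\xi^\top C\zeta$. Assembling these with the correct signs from the expansion yields exactly \eqref{bargxzsr}.

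There is no real obstacle here; the proof is essentially a bookkeeping exercise. The only subtlety worth flagging is the unified treatment of the $i=j$ and $i\neq j$ cases — one must verify that the formula $\xi_i\xi_j-\delta_{ij}\xi_i$ correctly encodes the vanishing of $c(\xi^{(n)}(i,j))$ in the degenerate cases $\xi_i<2$ (for $i=j$) and $\min(\xi_i,\xi_j)=0$ (for $i\neq j$), so that the identity holds term-by-term without any need to restrict the summation range. Once this is in place, the matrix identifications through $A^{(i)}$, $B^{(k)}$ and $C$ are immediate from their definitions and the partial symmetry \eqref{partsymm}.
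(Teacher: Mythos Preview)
Your proposal is correct and follows essentially the same approach as the paper: both proofs compute $c(\xi^{(n)}(i,j))$ in terms of $c(\xi)$ by distinguishing $i=j$ from $i\neq j$, substitute into \eqref{Axzijkl}, and then expand into four pieces that are identified with $(\mathcal{G}\xi\xi^\top)\bullet(\zeta\zeta^\top)$, $\sum_i\xi_i(\zeta^\top A^{(i)}\zeta)$, $\sum_k\zeta_k(\xi^\top B^{(k)}\xi)$, and $\xi^\top C\zeta$. The only cosmetic difference is that you package the two cases via the Kronecker delta formula $(\xi_i\xi_j-\delta_{ij}\xi_i)/((s+2)(s+1))$, whereas the paper writes the two cases separately and splits the quadruple sum into four explicit blocks before recombining; the algebra is identical.
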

\begin{proof}
It is easy to verify that, if $\xi^{(n)}(i,j)\in \Re^n\backslash \mathbb{N}^n$, then $c(\xi^{(n)}(i,j))=0$; otherwise, we have
$$c(\xi^{(n)}(i,j))=\left\{
\begin{array}{ll}
\displaystyle c(\xi)\frac{\xi_i(\xi_i-1)}{(s+2)(s+1)},&{\rm if ~}i=j,\\
\displaystyle c(\xi)\frac{\xi_i\xi_j}{(s+2)(s+1)},&{\rm otherwise.}
\end{array}
\right.$$
Similar result on $c(\zeta^{(m)}(k,l))$ also holds. Consequently, by (\ref{Axzijkl}), it holds that
$$
\begin{array}{lll}
\bar Q_{\xi\zeta}(\mathcal{G})&=&\displaystyle\sum_{1\leq i\leq n}\sum_{1\leq k\leq m}\frac{\xi_i(\xi_i-1)\zeta_k(\zeta_k-1)}{(s+2)(s+1)(r+2)(r+1)}g_{iikk}c(\xi)c(\zeta)\\
&&+\displaystyle\sum_{1\leq i\leq n}\sum_{1\leq k\neq l\leq m}\frac{\xi_i(\xi_i-1)\zeta_k\zeta_l}{(s+2)(s+1)(r+2)(r+1)}g_{iikl}c(\xi)c(\zeta)\\
&&+\displaystyle\sum_{1\leq i\neq j\leq n}\sum_{1\leq k\leq m}\frac{\xi_i\xi_j\zeta_k(\zeta_k-1)}{(s+2)(s+1)(r+2)(r+1)}g_{ijkk}c(\xi)c(\zeta)\\
&&+\displaystyle\sum_{1\leq i\neq j\leq n}\sum_{1\leq k\neq l\leq m}\frac{\xi_i\xi_j\zeta_k\zeta_l}{(s+2)(s+1)(r+2)(r+1)}g_{ijkl}c(\xi)c(\zeta)\\
&=&\displaystyle\frac{c(\xi)c(\zeta)}{(s+2)(s+1)(r+2)(r+1)}\left\{\sum_{1\leq i,j\leq n}\sum_{1\leq k,l\leq m}g_{ijkl}\xi_i\xi_j\zeta_k\zeta_l-\sum_{1\leq i\leq n}\sum_{1\leq k,l\leq m}g_{iikl}\xi_i\zeta_k\zeta_l\right.\\
&&\displaystyle\left.-\sum_{1\leq i,j\leq n}\sum_{1\leq k\leq m}g_{ijkk}\xi_i\xi_j\zeta_k+\sum_{1\leq i\leq n}\sum_{1\leq k\leq m}g_{iikk}\xi_i\zeta_k\right\},
\end{array}
$$
which exactly corresponds to (\ref{bargxzsr}).
\qed\end{proof}

It is not difficult to see that, when $\mathcal{G}=\mathcal{E}\in \mathcal{S}_{n,m}$, for any $\xi\in I(n,s+2)$ and $\zeta\in I(m,r+2)$, one has
\begin{equation}\label{Ezxcoeff}
\bar Q_{\xi\zeta}(\mathcal{E})=c(\xi)c(\zeta),
  \end{equation}
since in this case, by a simple computation, we have
$$\sum_{1\leq i,j\leq n}\sum_{1\leq k,l\leq m}g_{ijkl}\xi_i\xi_j\zeta_k\zeta_l=(s+2)^2(r+2)^2,~~~\sum_{1\leq i\leq n}\sum_{1\leq k,l\leq m}g_{iikl}\xi_i\zeta_k\zeta_l=(s+2)(r+2)^2,$$
and
$$\sum_{1\leq i,j\leq n}\sum_{1\leq k\leq m}g_{ijkk}\xi_i\xi_j\zeta_k=(s+2)^2(r+2), ~~~\sum_{1\leq i\leq n}\sum_{1\leq k\leq m}g_{iikk}\xi_i\zeta_k=(s+2)(r+2).$$

Based upon Lemma \ref{lemm2.3}, we can immediately derive a polyhedral representation of the cones $\mathscr{C}_{n,m}^{s,r}$.

\begin{theorem}\label{Th3}
For all $n, m,s, r\in \mathbb{N}$, one has
$$
\begin{array}{lll}
\mathscr{C}_{n,m}^{s,r}&=&\displaystyle\left\{\mathcal{G}\in \mathcal{S}_{n,m}~|~(\mathcal{G}\xi\xi^\top)\bullet(\zeta\zeta^\top)-\sum_{i=1}^n\xi_i(\zeta^\top A^{(i)}\zeta)\right.\\
&&\displaystyle\left.~~~-\sum_{k=1}^m\zeta_k(\xi^\top B^{(k)}\xi)+\xi^\top C\zeta\geq 0,~{\rm for ~all~}\xi\in I(n,s+2),\zeta\in I(m,r+2)\right\},
\end{array}
$$
where $A^{(i)}~(i=1,\ldots,n)$, $B^{(k)}~(k=1,\ldots,m)$ and $C$ are defined before Lemma \ref{lemm2.3}.
\end{theorem}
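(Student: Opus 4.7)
The plan is to read this as an essentially immediate corollary of Lemma~\ref{lemm2.3} combined with the defining property of $\mathscr{C}_{n,m}^{s,r}$. No new estimates are needed: the hard computational work has been absorbed into Lemma~\ref{lemm2.3} and the multinomial expansion displayed in (\ref{Gsrmulnomial}).

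First I would unwind the definition of $\mathscr{C}_{n,m}^{s,r}$. By Definition~\ref{DKdef}, $\mathcal{G}\in\mathscr{C}_{n,m}^{s,r}$ if and only if every coefficient in the monomial expansion of $P_{\mathcal{G}}^{(s,r)}(x,y)$ is nonnegative. From the computation leading to (\ref{Gsrmulnomial}), this expansion is precisely
\[
P_{\mathcal{G}}^{(s,r)}(x,y)=\sum_{\xi\in I(n,s+2)}\sum_{\zeta\in I(m,r+2)}\bar Q_{\xi\zeta}(\mathcal{G})\,x^{\xi}y^{\zeta},
\]
and distinct pairs $(\xi,\zeta)$ index distinct monomials $x^{\xi}y^{\zeta}$. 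Hence membership in $\mathscr{C}_{n,m}^{s,r}$ is equivalent to the finite system of linear inequalities $\bar Q_{\xi\zeta}(\mathcal{G})\geq 0$ for all $\xi\in I(n,s+2)$ and $\zeta\in I(m,r+2)$.

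Next I would substitute the closed form supplied by Lemma~\ref{lemm2.3}:
\[
\bar Q_{\xi\zeta}(\mathcal{G})=\frac{c(\xi)c(\zeta)}{(s+2)(s+1)(r+2)(r+1)}\,E_{\xi\zeta}(\mathcal{G}),
\]
where $E_{\xi\zeta}(\mathcal{G})$ denotes the bracketed expression $(\mathcal{G}\xi\xi^\top)\bullet(\zeta\zeta^\top)-\sum_{i}\xi_i(\zeta^\top A^{(i)}\zeta)-\sum_{k}\zeta_k(\xi^\top B^{(k)}\xi)+\xi^\top C\zeta$. The key observation is that for every admissible index $\xi\in I(n,s+2)\subset\mathbb{N}^n$ and $\zeta\in I(m,r+2)\subset\mathbb{N}^m$, the factors $c(\xi)=(s+2)!/\xi!$ and $c(\zeta)=(r+2)!/\zeta!$ are \emph{strictly positive} by the definition in (\ref{DDaapp}), and the denominator $(s+2)(s+1)(r+2)(r+1)$ is obviously positive. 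Consequently, the sign of $\bar Q_{\xi\zeta}(\mathcal{G})$ coincides with that of $E_{\xi\zeta}(\mathcal{G})$, and the nonnegativity of all coefficients of $P_{\mathcal{G}}^{(s,r)}$ is equivalent to the nonnegativity of $E_{\xi\zeta}(\mathcal{G})$ on $I(n,s+2)\times I(m,r+2)$. Combining this with the equivalence established in the previous step yields exactly the polyhedral description asserted in the theorem.

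Because each $E_{\xi\zeta}(\mathcal{G})$ is a linear functional in the entries of $\mathcal{G}$ (the matrices $A^{(i)},B^{(k)},C$ are linear readouts of $\mathcal{G}$), the resulting set is genuinely polyhedral, in fact cut out by the finitely many linear inequalities indexed by $I(n,s+2)\times I(m,r+2)$. There is no real obstacle in this proof; the only points requiring a sentence of care are (a) confirming that different pairs $(\xi,\zeta)\in I(n,s+2)\times I(m,r+2)$ give distinct monomials so that coefficient nonnegativity decouples into the inequalities $\bar Q_{\xi\zeta}(\mathcal{G})\geq 0$, and (b) confirming strict positivity of the prefactor $c(\xi)c(\zeta)/[(s+2)(s+1)(r+2)(r+1)]$ so that one may cancel it without reversing the inequality.
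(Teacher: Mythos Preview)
Your proposal is correct and follows precisely the paper's own approach: the paper's proof is the single line ``It follows from (\ref{Gsrmulnomial}) and (\ref{bargxzsr}),'' and you have simply unpacked that reference by spelling out why the coefficients of $P_{\mathcal{G}}^{(s,r)}$ are the $\bar Q_{\xi\zeta}(\mathcal{G})$ and why the strictly positive prefactor $c(\xi)c(\zeta)/[(s+2)(s+1)(r+2)(r+1)]$ lets one pass to the inequalities $E_{\xi\zeta}(\mathcal{G})\ge 0$.
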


\begin{proof}
It follows from (\ref{Gsrmulnomial}) and (\ref{bargxzsr}). The proof is completed.
\qed\end{proof}

From Theorem \ref{Th3}, we know that
$$
\begin{array}{lll}
\mathscr{C}_{n,m}^{s,r}&=&\displaystyle\{\mathcal{G}\in \mathcal{S}_{n,m}~|~\mathcal{G}\bullet[(\xi\xi^\top)\otimes(\zeta\zeta^\top)-{\rm Diag}(\xi)\otimes(\zeta\zeta^\top)-(\xi\xi^\top)\otimes{\rm Diag}(\zeta)\\
&&+{\rm Diag}(\xi)\otimes{\rm Diag}(\zeta)]\geq 0,~{\rm for ~all~}\xi\in I(n,s+2),\zeta\in I(m,r+2)\},
\end{array}
$$
and hence
\begin{equation*}\label{Cdaohj}
\begin{array}{lll}
(\mathscr{C}_{n,m}^{s,r})^*&\supset&\displaystyle\{(\xi\xi^\top)\otimes(\zeta\zeta^\top)-{\rm Diag}(\xi)\otimes(\zeta\zeta^\top)-(\xi\xi^\top)\otimes{\rm Diag}(\zeta)\\
&&+{\rm Diag}(\xi)\otimes{\rm Diag}(\zeta)~|~\xi\in I(n,s+2),\zeta\in I(m,r+2)\}\neq\emptyset.
\end{array}
\end{equation*}

\section{Quality of approximation}\label{Sec4}

For any given non-negative integers $s,r\in \mathbb{N}$, define
\begin{equation}\label{csrprimal}
{\rm min}\left\{ \mathcal{A}\bullet \mathcal{X} ~|~\mathcal{E}\bullet \mathcal{X}=1,\mathcal{X}\in (\mathscr{C}_{n,m}^{s,r})^*\right\}
\end{equation}
which has dual problem
\begin{equation}\label{csrdual}
{\rm max}\left\{\lambda~|~\mathcal{A}-\lambda \mathcal{E}\in \mathscr{C}_{n,m}^{s,r},\lambda\in \Re\right\}.
\end{equation}

By taking $\bar \xi=(s+2,0,\ldots, 0)^\top\in I(n,s+2)$ and $\bar \zeta=(r+2,0,\ldots,0)^\top\in I(m,r+2)$, we may find a feasible point of (\ref{csrprimal}). On the other hand, it is obvious that there exists a $\bar \lambda\in \Re$ such that all elements of $\mathcal{A}-\bar \lambda \mathcal{E}$ are positive, which implies $\mathcal{A}-\bar \lambda \mathcal{E}\in {\rm int}(\mathscr{C}_{n,m}^{s,r})$. By Theorem \ref{ConicTh}, we know that the optimal value, denoted by $p_{\mathscr{C}}^{(s,r)}$, of (\ref{csrprimal}) equals to one of (\ref{csrdual}). Moreover, we know that problem (\ref{csrdual}) is a relaxation of problem (\ref{duconicopt}), in which the copositive cone $\mathscr{C}_{n,m}$ is approximated by $\mathscr{C}_{n,m}^{s,r}$ in the sense that every $\mathcal{M}\in \mathscr{C}_{n,m}$ must lie in some cone $\mathscr{C}_{n,m}^{s,r}$ for sufficiently large $s$ and $r$. It therefore follows that $p_{\mathscr{C}}^{(s,r)}\leq p^{\rm min}_{\mathcal{A}}$ for all sufficiently large $s,r$. We now provide an alternative representation of $p_{\mathscr{C}}^{(s,r)}$. This representation uses the following two rational grids which approximate the standard simplices $\Delta_n$ and $\Delta_m$ in (\ref{polyOpt}):
\begin{equation*}\label{Deltnm}
 \Delta_n(s):=\left\{x\in \Delta_n~|~(s+2)x\in \mathbb{N}^n\right\}~~~{\rm and}~~~
 \Delta_m(r):=\left\{y\in \Delta_m~|~(r+2)y\in \mathbb{N}^m\right\}.
\end{equation*}
Consequently, a natural approximation of problem (\ref{polyOpt}) would be
\begin{equation}\label{srapprox}
p^{(s,r)}_{\Delta}:={\rm min}\left\{p_{\mathcal{A}}(x,y)~|~x\in \Delta_n(s),y\in \Delta_m(r)\right\},
\end{equation}
which satisfies $p^{(s,r)}_{\Delta}\geq p^{\rm min}_{\mathcal{A}}$.

We now establish the connection between $p_{\mathscr{C}}^{(s,r)}$ and $p^{(s,r)}_{\Delta}$.

\begin{theorem}\label{Th4}
For any given $s, r\in \mathbb{N}$. We consider the rational discretization $\Delta_n(s)\times\Delta_m(r)$ of $\Delta_n\times \Delta_m$. If $\mathcal{A}\in \mathcal{S}_{n,m}$, then
\begin{equation}\label{AbiQ}
 p_{\mathscr{C}}^{(s,r)}=\frac{(s+2)(r+2)}{(s+1)(r+1)}~{\rm min}\left\{\bar p_{\mathcal{A}}(x,y)~|~x\in \Delta_n(s),y\in \Delta_m(r)\right\},
\end{equation}
where $$
\bar p_{\mathcal{A}}(x,y):=p_{\mathcal{A}}(x,y)-\sum_{i=1}^nx_iy^\top \tilde A^{(i)}y-\sum_{k=1}^my_kx^\top \hat A^{(k)}x+x^\top \check Ay
$$
with
\begin{numcases}{}
\tilde A^{(i)}=\frac{1}{s+2}(a_{iikl})_{1\leq k,l\leq m},\;\;(i=1,\ldots,n), \nn\\
\hat A^{(k)}=\frac{1}{r+2}(a_{ijkk})_{1\leq i,j\leq n},\;\;(k=1,\ldots,m), \nn \\
\check A=\frac{1}{(s+2)(r+2)}(a_{iikk})_{1\leq i\leq n,l\leq k\leq m}.\nn
\end{numcases}
\end{theorem}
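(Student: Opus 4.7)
The strategy is to pass to the dual problem \eqref{csrdual}, whose optimal value coincides with $p_{\mathscr{C}}^{(s,r)}$ by the strong duality argument already given just before the statement of the theorem. The work therefore reduces to rendering the constraint $\mathcal{A}-\lambda\mathcal{E}\in\mathscr{C}_{n,m}^{s,r}$ as an explicit family of linear inequalities in $\lambda$ indexed by the grid points $(\xi,\zeta)\in I(n,s+2)\times I(m,r+2)$, and then maximizing $\lambda$ subject to that family.

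The first step is to invoke Theorem \ref{Th3} together with Lemma \ref{lemm2.3}: since $c(\xi)c(\zeta)>0$ for every $\xi\in I(n,s+2)$ and $\zeta\in I(m,r+2)$, membership in $\mathscr{C}_{n,m}^{s,r}$ is equivalent to $\bar Q_{\xi\zeta}(\mathcal{G})\geq 0$ for all such $(\xi,\zeta)$. Because the map $\mathcal{G}\mapsto\bar Q_{\xi\zeta}(\mathcal{G})$ defined in \eqref{Axzijkl} is linear in its tensor argument, and because identity \eqref{Ezxcoeff} gives $\bar Q_{\xi\zeta}(\mathcal{E})=c(\xi)c(\zeta)$, the condition $\mathcal{A}-\lambda\mathcal{E}\in\mathscr{C}_{n,m}^{s,r}$ collapses to $\lambda\leq \bar Q_{\xi\zeta}(\mathcal{A})/[c(\xi)c(\zeta)]$ for every $\xi,\zeta$. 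Hence
$$
p_{\mathscr{C}}^{(s,r)}=\min_{\xi\in I(n,s+2),\,\zeta\in I(m,r+2)}\frac{\bar Q_{\xi\zeta}(\mathcal{A})}{c(\xi)c(\zeta)}.
$$

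The second step is a change of variables: set $x=\xi/(s+2)$ and $y=\zeta/(r+2)$, which is a bijection between $I(n,s+2)\times I(m,r+2)$ and $\Delta_n(s)\times\Delta_m(r)$. Expanding $\bar Q_{\xi\zeta}(\mathcal{A})/[c(\xi)c(\zeta)]$ via Lemma \ref{lemm2.3} and noting that the four terms $(\mathcal{A}\xi\xi^\top)\bullet(\zeta\zeta^\top)$, $\sum_i\xi_i(\zeta^\top A^{(i)}\zeta)$, $\sum_k\zeta_k(\xi^\top B^{(k)}\xi)$ and $\xi^\top C\zeta$ are homogeneous in $(\xi,\zeta)$ of bidegrees $(2,2)$, $(1,2)$, $(2,1)$, $(1,1)$ respectively, the substitution brings out the scaling factors $(s+2)^2(r+2)^2$, $(s+2)(r+2)^2$, $(s+2)^2(r+2)$ and $(s+2)(r+2)$. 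After dividing by the prefactor $(s+2)(s+1)(r+2)(r+1)$ from Lemma \ref{lemm2.3}, the common factor $\tfrac{(s+2)(r+2)}{(s+1)(r+1)}$ pulls out, and the residual scalars $\tfrac{1}{s+2}$, $\tfrac{1}{r+2}$, $\tfrac{1}{(s+2)(r+2)}$ on the three correction terms are precisely those absorbed into the definitions of $\tilde A^{(i)},\hat A^{(k)},\check A$. The bracketed expression therefore reads $\bar p_{\mathcal{A}}(x,y)$, yielding \eqref{AbiQ}.

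The only delicate point is the bookkeeping of the rescaling factors in the second step: one must check that the three different normalizations hiding in $\tilde A^{(i)}$, $\hat A^{(k)}$, and $\check A$ line up exactly with the bidegree drops $(1,2)\to(2,2)$, $(2,1)\to(2,2)$, and $(1,1)\to(2,2)$ of the correction terms relative to the leading term $p_{\mathcal{A}}$. Given Lemma \ref{lemm2.3} and \eqref{Ezxcoeff}, this verification is entirely mechanical and no conceptual obstacle remains beyond the duality setup already in place.
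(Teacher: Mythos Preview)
Your proposal is correct and follows essentially the same route as the paper: both arguments pass to the dual \eqref{csrdual}, use the linearity $\bar Q_{\xi\zeta}(\mathcal{A}-\lambda\mathcal{E})=\bar Q_{\xi\zeta}(\mathcal{A})-\lambda\,c(\xi)c(\zeta)$ via Lemma~\ref{lemm2.3} and \eqref{Ezxcoeff}, read off $p_{\mathscr{C}}^{(s,r)}$ as a minimum over $I(n,s+2)\times I(m,r+2)$, and then substitute $x=\xi/(s+2)$, $y=\zeta/(r+2)$ to land on $\bar p_{\mathcal{A}}$. Your homogeneity bookkeeping is just a slightly more verbose version of the paper's direct expansion.
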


\begin{proof}
By putting $\mathcal{G}=\mathcal{A}-\lambda \mathcal{E}$, we have
$$
\begin{array}{lll}
\bar Q_{\xi\zeta}(\mathcal{G})&=&\bar Q_{\xi\zeta}(\mathcal{A})-\lambda\bar Q_{\xi\zeta}(\mathcal{E})\\
&=&\displaystyle c(\xi)c(\zeta)\left\{\frac{1}{(s+2)(s+1)(r+2)(r+1)}\left((\mathcal{A}\xi\xi^\top)\bullet(\zeta\zeta^\top)-(s+2)\sum_{i=1}^n\xi_i(\zeta^\top \tilde{A}^{(i)}\zeta)\right.\right.\\
&&\displaystyle\left.\left.-(r+2)\sum_{k=1}^m\zeta_k(\xi^\top \hat{A}^{(k)}\xi)+(s+2)(r+2)\xi^\top \check{A}\zeta\right)-\lambda\right\},
\end{array}
$$
where the second equality comes from (\ref{bargxzsr}) and (\ref{Ezxcoeff}). Consequently, by (\ref{csrdual}) and Theorem \ref{Th3}, we have
$$
\begin{array}{lll}
p_{\mathscr{C}}^{(s,r)}&=&\displaystyle \frac{1}{(s+2)(s+1)(r+2)(r+1)} {\rm min}\left\{\left((\mathcal{A}\xi\xi^\top)\bullet(\zeta\zeta^\top)-(s+2)\sum_{i=1}^n\xi_i(\zeta^\top \tilde{A}^{(i)}\zeta)\right.\right.\\
&&~~~\displaystyle\left.\left.-(r+2)\sum_{k=1}^m\zeta_k(\xi^\top\hat{A}^{(k)}\xi)+(s+2)(r+2)\xi^\top \check{A}\zeta\right)~|~\xi\in I(n,s+2),~\zeta\in I(m,r+2)\right\},
\end{array}
$$
which implies that (\ref{AbiQ}) holds, by putting $x=\frac{1}{s+2}\xi$ and $y=\frac{1}{r+2}\zeta$. We complete the proof.
\qed\end{proof}

From Theorem \ref{Th4}, we see that $p^{(s,r)}_{\mathscr{C}}$ can be obtained by only function evaluations at points on the rational grid $\Delta_n(s)\times\Delta_m(r)$, which could help us obtain $p^{(s,r)}_{\mathscr{C}}$ more easily than doing so by (\ref{csrdual}) directly.

Now we present our main result in this paper.

\begin{theorem}\label{Th5} Let $p_{\mathcal{A}}^{\rm max}={\rm max}\{p_{\mathcal{A}}(x,y)~|~x\in \Delta_n,~y\in \Delta_m\}$. One has
\begin{equation*}\label{approxrat1}
p_{\mathcal{A}}^{\rm min}-p^{(s,r)}_{\mathscr{C}}\leq \frac{s+r+4}{(s+1)(r+1)}\left(p_{\mathcal{A}}^{\rm max}-p_{\mathcal{A}}^{\rm min}\right)
\end{equation*}
and
\begin{equation*}\label{approxrat2}
p^{(s,r)}_{\Delta}-p_{\mathcal{A}}^{\rm min}\leq \frac{s+r+4}{(s+2)(r+2)}\left(p_{\mathcal{A}}^{\rm max}-p_{\mathcal{A}}^{\rm min}\right).
\end{equation*}
\end{theorem}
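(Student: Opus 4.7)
The plan is to prove the two inequalities along different routes: the first by a direct algebraic manipulation of the formula from Theorem~\ref{Th4}, and the second by a two-step reduction to the single-simplex bound of Theorem~\ref{Th0}(i).

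For the first inequality, let $(\tilde{x},\tilde{y})\in\Delta_n(s)\times\Delta_m(r)$ attain the grid minimum of $\bar p_{\mathcal{A}}$, so that Theorem~\ref{Th4} yields $p_{\mathscr{C}}^{(s,r)}=\tfrac{(s+2)(r+2)}{(s+1)(r+1)}\,\bar p_{\mathcal{A}}(\tilde{x},\tilde{y})$. The crucial observation is that each of the three correction terms in $\bar p_{\mathcal{A}}$ is, for any $(x,y)\in\Delta_n\times\Delta_m$, a convex combination of values of $p_{\mathcal{A}}$ on $\Delta_n\times\Delta_m$: for instance, $\sum_i x_i y^\top \tilde A^{(i)}y=\tfrac{1}{s+2}\sum_i x_i\,p_{\mathcal{A}}(e^{(n)}_i,y)$, and analogously for the $\hat A^{(k)}$- and $\check A$-terms. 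Hence each correction term evaluated at $(\tilde{x},\tilde{y})$ lies in $[p_{\mathcal{A}}^{\rm min},p_{\mathcal{A}}^{\rm max}]$. Lower-bounding the four constituents of $\bar p_{\mathcal{A}}(\tilde{x},\tilde{y})$ term by term---the positive-coefficient pieces (the $p_{\mathcal{A}}$- and $\check A$-terms) by $p_{\mathcal{A}}^{\rm min}$, the negative-coefficient pieces (the $\tilde A^{(i)}$- and $\hat A^{(k)}$-terms) by $p_{\mathcal{A}}^{\rm max}$---then multiplying through by $\tfrac{(s+2)(r+2)}{(s+1)(r+1)}$ and simplifying via the identity $(s+2)(r+2)+1=(s+1)(r+1)+(s+r+4)$ reproduces exactly the claimed bound on $p_{\mathcal{A}}^{\rm min}-p_{\mathscr{C}}^{(s,r)}$.

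For the second inequality, pick $(x^*,y^*)\in\argmin_{\Delta_n\times\Delta_m} p_{\mathcal{A}}$ and apply Theorem~\ref{Th0}(i) twice. First, the slice $x\mapsto p_{\mathcal{A}}(x,y^*)$ is a quadratic form on $\Delta_n$ whose minimum equals $p_{\mathcal{A}}^{\rm min}$ and whose range is contained in $[p_{\mathcal{A}}^{\rm min},p_{\mathcal{A}}^{\rm max}]$, so Theorem~\ref{Th0}(i) produces $\tilde{x}\in\Delta_n(s)$ with $p_{\mathcal{A}}(\tilde{x},y^*)\le p_{\mathcal{A}}^{\rm min}+\tfrac{1}{s+2}(p_{\mathcal{A}}^{\rm max}-p_{\mathcal{A}}^{\rm min})$. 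Next, the slice $y\mapsto p_{\mathcal{A}}(\tilde{x},y)$ is a quadratic form on $\Delta_m$ whose range is again contained in $[p_{\mathcal{A}}^{\rm min},p_{\mathcal{A}}^{\rm max}]$ and whose minimum is at most $p_{\mathcal{A}}(\tilde{x},y^*)$, so Theorem~\ref{Th0}(i) produces $\tilde{y}\in\Delta_m(r)$ with $p_{\mathcal{A}}(\tilde{x},\tilde{y})\le p_{\mathcal{A}}(\tilde{x},y^*)+\tfrac{1}{r+2}(p_{\mathcal{A}}^{\rm max}-p_{\mathcal{A}}^{\rm min})$. Since $p^{(s,r)}_{\Delta}\le p_{\mathcal{A}}(\tilde{x},\tilde{y})$ and $\tfrac{1}{s+2}+\tfrac{1}{r+2}=\tfrac{s+r+4}{(s+2)(r+2)}$, the bound follows.

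The main obstacle is really arithmetic bookkeeping: in the first inequality, carefully tracking the four coefficients of $p_{\mathcal{A}}(\tilde{x},\tilde{y})$ and the three correction terms after multiplication by $\tfrac{(s+2)(r+2)}{(s+1)(r+1)}$ and spotting the identity $(s+2)(r+2)+1=(s+1)(r+1)+(s+r+4)$; in the second, observing that because each univariate slice still sits inside $\Delta_n\times\Delta_m$, its extremal values remain sandwiched by $p_{\mathcal{A}}^{\rm min}$ and $p_{\mathcal{A}}^{\rm max}$, so Theorem~\ref{Th0}(i) is applicable with the unconditional range $p_{\mathcal{A}}^{\rm max}-p_{\mathcal{A}}^{\rm min}$. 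No new machinery beyond Theorems~\ref{Th0} and~\ref{Th4} is required.
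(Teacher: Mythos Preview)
Your argument for the first inequality is essentially identical to the paper's: both of you bound the four constituents of $\bar p_{\mathcal{A}}$ via the identifications $(s+2)y^\top\tilde A^{(i)}y=p_{\mathcal{A}}(e_i^{(n)},y)$, $(r+2)x^\top\hat A^{(k)}x=p_{\mathcal{A}}(x,e_k^{(m)})$, $(s+2)(r+2)x^\top\check Ay=\sum_{i,k}x_iy_k\,p_{\mathcal{A}}(e_i^{(n)},e_k^{(m)})$, then invoke Theorem~\ref{Th4} and the identity $(s+2)(r+2)+1=(s+1)(r+1)+(s+r+4)$.

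For the second inequality, however, you take a genuinely different route. The paper stays entirely inside the Theorem~\ref{Th4} framework: it picks the grid minimizer $(\bar x,\bar y)$ of $\bar p_{\mathcal{A}}$, uses $p_{\mathcal{A}}(\bar x,\bar y)\ge p_\Delta^{(s,r)}$ to write $p_\Delta^{(s,r)}\le\tfrac{(s+1)(r+1)}{(s+2)(r+2)}p_{\mathscr{C}}^{(s,r)}+\text{(correction terms)}$, and then substitutes $p_{\mathscr{C}}^{(s,r)}\le p_{\mathcal{A}}^{\rm min}$ together with the same bounds on the correction terms as before. Your approach instead bypasses $p_{\mathscr{C}}^{(s,r)}$ and the copositive machinery altogether: you apply the one-simplex bound of Theorem~\ref{Th0}(i) coordinate-wise, first in $x$ at $y=y^*$ and then in $y$ at $x=\tilde x$, exploiting that each slice has range contained in $[p_{\mathcal{A}}^{\rm min},p_{\mathcal{A}}^{\rm max}]$. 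This is correct and arguably more transparent---it makes clear that the grid bound $p_\Delta^{(s,r)}$ follows from the StQP result by pure iteration, independent of the conic reformulation---whereas the paper's argument has the virtue of deriving both inequalities from a single representation (Theorem~\ref{Th4}), which is what makes the extension to the multi-quadratic case in Theorem~\ref{Th6} systematic.
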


\begin{proof}
It is obvious that $a_{iikk}=p_{\mathcal{A}}(e_i^{(n)},e_k^{(m)})\geq p_{\mathcal{A}}^{\rm min}$ for every $i=1,\ldots,n$ and $k=1,\ldots,m$, which implies that \begin{equation}\label{Cxyik}
(s+2)(r+2)x^\top \check{A}y\geq p_{\mathcal{A}}^{\rm min}, ~~~\forall~(x,y)\in \Delta_n(s)\times \Delta_m(r).
\end{equation}
Moreover, it is easy to see that for $i=1,\ldots,n$, $k=1,\ldots,m$ and $(x,y)\in \Re^n\times \Re^m$, we have $(s+2)y^\top \tilde{A}^{(i)}y=p_{\mathcal{A}}(e_i^{(n)},y)$ and $(r+2)x^\top \hat{A}^{(k)}x=p_{\mathcal{A}}(x,e_k^{(m)})$, which implies that
\begin{equation}\label{Aixy}
\begin{array}{lll}
(s+2){\rm max}\{y^\top \tilde{A}^{(i)}y~|~y\in \Delta_m(r)\}&=&{\rm max}\{p_{\mathcal{A}}(e_i^{(n)},y)~|~y\in \Delta_m(r)\}\\
&\leq& {\rm max}\{p_{\mathcal{A}}(x,y)~|~x\in \Delta_n(s),~y\in \Delta_m(r)\}\\
&=&p_{\mathcal{A}}^{\rm max}
\end{array}
\end{equation}
for $i=1,\ldots,n$, and
\begin{equation}\label{Bkxy}
\begin{array}{lll}
(r+2){\rm max}\{x^\top \hat{A}^{(k)}x~|~x\in \Delta_n(s)\}&=&{\rm max}\{p_{\mathcal{A}}(x,e_k^{(m)})~|~x\in \Delta_n(s)\}\\
&\leq& {\rm max}\{p_{\mathcal{A}}(x,y)~|~x\in \Delta_n(s),~y\in \Delta_m(r)\}\\
&=&p_{\mathcal{A}}^{\rm max}
\end{array}
\end{equation}
for $k=1,\ldots,m$. Consequently, by (\ref{Aixy}) and (\ref{Bkxy}), we know that
\begin{equation}\label{xyABinq}
\sum_{i=1}^nx_iy^\top A^{(i)}y\leq  p_{\mathcal{A}}^{\rm max}/(s+2)~~{\rm and}~~ \sum_{k=1}^ny_kx^\top \hat{A}^{(k)}x\leq  p_{\mathcal{A}}^{\rm max}/(r+2),~~~~\forall~x\in \Delta_n(s),~y\in \Delta_m(r).
\end{equation}
By Theorem \ref{Th4}, (\ref{Cxyik}) and (\ref{xyABinq}), it holds that
$$
\begin{array}{lll}
p_{\mathscr{C}}^{(s,r)}&=&\displaystyle\frac{(s+2)(r+2)}{(s+1)(r+1)}~{\rm min}\left\{\bar p_{\mathcal{A}}(x,y)~|~x\in \Delta_n(s),y\in \Delta_m(r)\right\}\\
&\geq&\displaystyle\frac{(s+2)(r+2)}{(s+1)(r+1)}\left(p_{\mathcal{A}}^{\rm min}-\frac{1}{s+2}p_{\mathcal{A}}^{\rm max}-\frac{1}{r+2}p_{\mathcal{A}}^{\rm max}+\frac{1}{(s+2)(r+2)}p_{\mathcal{A}}^{\rm min}\right)\\
&=&\displaystyle\frac{(s+2)(r+2)+1}{(s+1)(r+1)}p_{\mathcal{A}}^{\rm min}-\frac{s+r+4}{(s+1)(r+1)}p_{\mathcal{A}}^{\rm max}.
\end{array}
$$
The first result follows.

The second inequality is derived by a similar way. By Theorem \ref{Th4}, there exists $(\bar x,\bar y)\in \Delta_n(s)\times \Delta_m(r)$ such that
$$
\begin{array}{lll}
p_{\mathscr{C}}^{(s,r)}&=&\displaystyle\frac{(s+2)(r+2)}{(s+1)(r+1)}\bar p_{\mathcal{A}}(\bar x,\bar y)\\
&\geq &\displaystyle\frac{(s+2)(r+2)}{(s+1)(r+1)}\left(p_{\Delta}^{(s,r)}-\sum_{i=1}^n\bar x_i\bar y^\top \tilde{A}^{(i)}\bar y-\sum_{k=1}^m\bar y_k\bar x^\top \hat{A}^{(k)}\bar x+\bar x^\top \check{A}\bar y\right),
\end{array}
$$
which implies
\begin{equation}\label{pminpmax}
\begin{array}{lll}
p_{\Delta}^{(s,r)}&\leq& \displaystyle\frac{(s+1)(r+1)}{(s+2)(r+2)}p_{\mathscr{C}}^{(s,r)}+\sum_{i=1}^n\bar x_i\bar y^\top \tilde{A}^{(i)}\bar y+\sum_{k=1}^m\bar y_k\bar x^\top \hat{A}^{(k)}\bar x-\bar x^\top \check{A}\bar y\\
&\leq&\displaystyle\frac{(s+1)(r+1)}{(s+2)(r+2)}p_{\mathcal{A}}^{\rm min}-\frac{1}{(s+2)(r+2)}p_{\mathcal{A}}^{\rm min}+\frac{1}{s+2}p_{\mathcal{A}}^{\rm max}+\frac{1}{r+2}p_{\mathcal{A}}^{\rm max}.
\end{array}
\end{equation}
By (\ref{pminpmax}), we obtain
$$
p_{\Delta}^{(s,r)}-p_{\mathcal{A}}^{\rm min}\leq \displaystyle\frac{s+r+4}{(s+2)(r+2)}\left(p_{\mathcal{A}}^{\rm max}-p_{\mathcal{A}}^{\rm min}\right).
$$
We obtain the desired result and complete the proof.
\qed\end{proof}

In the rest of this section, we consider the following standard multi-quadratic optimization problem (StMQP)
\begin{equation}\label{BQPS}
\begin{array}{rl}
p^{\rm min}_{\mathcal{A}}={\rm min}& p_{\mathcal{A}}\left(x^{(1)},\ldots,x^{(d)}\right):=
\underset{i_1,j_1=1}{\overset{n_1}\sum}\cdots\underset{i_d,j_d=1}{\overset{n_d}\sum} a_{i_1j_1\ldots i_dj_d}x^{(1)}_{i_1}x^{(1)}_{j_1}\cdots x^{(d)}_{i_d}x^{(d)}_{j_d}\\
{\rm s.t.}&\left(x^{(1)},\ldots,x^{(d)}\right)\in \Delta_{n_1}\times\cdots\times\Delta_{n_d},
\end{array}
\end{equation}
where $\mathcal{A}=(a_{i_1j_1\ldots i_dj_d})_{1\leq i_k,j_k\leq n_k,1\leq k\leq d}$.

For any given $r_1,\ldots, r_d\in \mathbb{N}$, define
$$
p_{\mathscr{C}}^{(r_1,\ldots,r_d)}={\rm min}\left\{\mathcal{A}\bullet \mathcal{X}~|~\mathcal{X}\bullet \mathcal{E}=1,\mathcal{X}\in (\mathscr{C}_{n_1,\ldots,n_d}^{r_1,\ldots,r_d})^*\right\},
$$
whose dual problem is
$$
p_{\mathscr{C}}^{(r_1,\ldots,r_d)}={\rm max}\left\{\lambda\in \Re~|~\mathcal{A}-\lambda \mathcal{E}\in \mathscr{C}_{n_1,\ldots,n_d}^{r_1,\ldots,r_d}\right\}.
$$
Similarly as for StBQP, define
$$
p_{\Delta}^{(r_1,\ldots,r_d)}={\rm min}\left\{p_{\mathcal{A}}(x^{(1)},\ldots,x^{(d)})~|~x^{(k)}\in \Delta_{n_k}(r_k), 1\leq k\leq d\right\},
$$
where $\mathscr{C}_{n_1,\ldots,n_d}^{r_1,\ldots,r_d}$ is defined  similarly as Definition \ref{DKdef}. Notice that we can obtain $p_{\mathscr{C}}^{(r_1,\ldots,r_d)}$ by only doing function evaluations at points on $\Delta_{n_1}(r_1)\times \cdots\times\Delta_{n_d}(r_d)$.

\begin{theorem}\label{Th6}
For any given $r_1,\ldots,r_d\in \mathbb{N}$, one has
\begin{equation*}\label{approxrat1}
p_{\mathcal{A}}^{\rm min}-p^{(r_1,\ldots,r_d)}_{\mathscr{C}}\leq \frac{\tau(r_1,\ldots,r_d;d)}{\prod_{i=1}^d(r_i+1)}\left(p_{\mathcal{A}}^{\rm max}-p_{\mathcal{A}}^{\rm min}\right)
\end{equation*}
and
\begin{equation*}\label{approxrat2}
p^{(r_1,\ldots,r_d)}_{\Delta}-p_{\mathcal{A}}^{\rm min}\leq \frac{\tau(r_1,\ldots,r_d;d)}{\prod_{i=1}^d(r_i+2)}\left(p_{\mathcal{A}}^{\rm max}-p_{\mathcal{A}}^{\rm min}\right),
\end{equation*}
where $p_{\mathcal{A}}^{\rm max}$ is the maximum value of objective in \eqref{BQPS} and
$$
\begin{array}{lll}
\tau(r_1,\ldots,r_d;d)&:=&\underset{\nu=(\nu_1,\ldots,\nu_d)\in \{0,1\}^d,|\nu|=d-1}\sum(r_1+2)^{\nu_1}\cdots (r_d+2)^{\nu_d}\\
&&+\underset{\nu=(\nu_1,\ldots,\nu_d)\in \{0,1\}^d,|\nu|=d-3}\sum(r_1+2)^{\nu_1}\cdots (r_d+2)^{\nu_d}+\cdots.
\end{array}$$
\end{theorem}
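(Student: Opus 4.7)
The plan is to mirror the proof of Theorem \ref{Th5} with a $d$-fold inclusion-exclusion expansion replacing the four-term expansion that appeared in Theorem \ref{Th4} for $d=2$. First I would establish the $d$-variate analog of Theorem \ref{Th4}, namely
\begin{equation*}
p_{\mathscr{C}}^{(r_1,\ldots,r_d)} = \frac{\prod_{k=1}^d(r_k+2)}{\prod_{k=1}^d(r_k+1)} \, \min\Bigl\{\bar p_{\mathcal{A}}\bigl(x^{(1)},\ldots,x^{(d)}\bigr) : x^{(k)}\in\Delta_{n_k}(r_k),\ k=1,\ldots,d\Bigr\},
\end{equation*}
where
\begin{equation*}
\bar p_{\mathcal{A}}\bigl(x^{(1)},\ldots,x^{(d)}\bigr) = \sum_{S\subseteq\{1,\ldots,d\}} \frac{(-1)^{|S|}}{\prod_{k\in S}(r_k+2)} \, q_S\bigl(x^{(1)},\ldots,x^{(d)}\bigr),
\end{equation*}
and $q_S$ is the polynomial obtained from $p_{\mathcal{A}}$ by identifying $i_k$ with $j_k$ for each $k\in S$. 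This follows from the same falling-factorial identity $\xi_i(\xi_i-1)=\xi_i^2-\xi_i$ used in Theorem \ref{Th4}, now applied independently to each block $k=1,\ldots,d$; the resulting product over $k$ produces exactly the $2^d$ terms indexed by subsets $S$.

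Second, I would bound each contraction term. Contracting the $k$-th pair of indices and evaluating at $x^{(k)}\in\Delta_{n_k}(r_k)$ amounts to averaging values of $p_{\mathcal{A}}$ over choices of vertices $e^{(n_k)}_{i_k}$ (for $k\in S$) against the remaining free arguments, with the weights $x^{(k)}_{i_k}$. Hence
\begin{equation*}
\frac{p_{\mathcal{A}}^{\min}}{\prod_{k\in S}(r_k+2)} \;\leq\; \frac{q_S(x^{(1)},\ldots,x^{(d)})}{\prod_{k\in S}(r_k+2)} \;\leq\; \frac{p_{\mathcal{A}}^{\max}}{\prod_{k\in S}(r_k+2)}.
\end{equation*}
Using the lower bound on $|S|$-even terms (including $S=\emptyset$) and the upper bound on $|S|$-odd terms, the alternating signs deliver
\begin{equation*}
\bar p_{\mathcal{A}} \;\geq\; p_{\mathcal{A}}^{\min}\sum_{|S|\,\text{even}}\frac{1}{\prod_{k\in S}(r_k+2)} \;-\; p_{\mathcal{A}}^{\max}\sum_{|S|\,\text{odd}}\frac{1}{\prod_{k\in S}(r_k+2)}.
\end{equation*}

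Third, multiplying by $\prod(r_k+2)/\prod(r_k+1)$ and writing $\prod(r_k+2)/\prod_{k\in S}(r_k+2) = \prod_{k\notin S}(r_k+2)$, the coefficient of $p_{\mathcal{A}}^{\max}$ becomes $O/\prod(r_k+1)$ with $O := \sum_{|S|\,\text{odd}}\prod_{k\notin S}(r_k+2)$. Re-indexing via $\nu_k = 1$ if $k\notin S$ and $\nu_k=0$ otherwise converts the sum over odd $|S|$ into the sum over $|\nu|\in\{d-1,d-3,\ldots\}$, which is exactly $\tau(r_1,\ldots,r_d;d)$. The coefficient of $p_{\mathcal{A}}^{\min}$ then collapses by the binomial identity
\begin{equation*}
\prod_{k=1}^d(r_k+1) \;=\; \prod_{k=1}^d\bigl((r_k+2)-1\bigr) \;=\; \sum_{S\subseteq\{1,\ldots,d\}}(-1)^{|S|}\prod_{k\notin S}(r_k+2),
\end{equation*}
which yields exactly the first claimed bound $p_{\mathcal{A}}^{\min}-p_{\mathscr{C}}^{(r_1,\ldots,r_d)}\leq \tau/\prod(r_k+1)\cdot(p_{\mathcal{A}}^{\max}-p_{\mathcal{A}}^{\min})$. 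The second inequality follows by the symmetric argument from Theorem \ref{Th5}: pick a minimizer $(\bar x^{(1)},\ldots,\bar x^{(d)})$ of $\bar p_{\mathcal{A}}$ on the rational grid, use $p_{\Delta}^{(r_1,\ldots,r_d)}\leq p_{\mathcal{A}}(\bar x^{(1)},\ldots,\bar x^{(d)})$, and apply the same per-term bounds in the reverse direction; the factor $\prod(r_k+2)/\prod(r_k+1)$ in front of $p_{\mathscr{C}}^{(r_1,\ldots,r_d)}$ cancels out so that $\prod(r_k+2)$ appears in the denominator of the final bound.

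The main obstacle is purely combinatorial: keeping track of the $2^d$ signed inclusion-exclusion terms so that the telescoping through the identity above produces exactly the asymmetric quantity $\tau(r_1,\ldots,r_d;d)$. Conceptually the argument introduces nothing beyond what is already used to prove Theorem \ref{Th5}; the derivation of the $d$-variate analog of Theorem \ref{Th4} (via falling factorials block by block) is the technical ingredient that must be supplied before the bounding step can proceed.
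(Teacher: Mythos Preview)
Your proposal is correct and follows essentially the same approach as the paper: establish the $d$-variate analogue of Theorem~\ref{Th4} via the block-by-block falling-factorial expansion, bound the odd-$|S|$ contraction terms by $p_{\mathcal A}^{\max}$ and the even-$|S|$ terms by $p_{\mathcal A}^{\min}$, and then use the identity $\prod_k(r_k+1)=\prod_k((r_k+2)-1)=\sum_S(-1)^{|S|}\prod_{k\notin S}(r_k+2)$ to show the two coefficients coincide. The paper carries out exactly this computation, only with the subsets written out level-by-level rather than in your compact $S\subseteq\{1,\ldots,d\}$ notation.
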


\begin{proof}
By using similar arguments as for Theorem \ref{Th4}, we can obtain that
$$
p_{\mathscr{C}}^{(r_1,\ldots,r_d)}=\displaystyle\frac{\prod_{i=1}^d(r_i+2)}{\prod_{i=1}^d(r_i+1)}{\rm min}\left\{\tilde{p}_{\mathcal{A}}(x^{(1)},\ldots,x^{(d)})~|~x^{(k)}\in \Delta_{n_k}(r_k),~ k=1,\ldots,d\right\},
$$
where
$$
\begin{array}{l}
\tilde{p}_{\mathcal{A}}(x^{(1)},\ldots,x^{(d)})\\
=p_{\mathcal{A}}(x^{(1)},\ldots,x^{(d)})
~~\left.\displaystyle-\sum_{k=1}^d\sum_{i_k=1}^{n_k}x_{i_k}^{(k)}p_{\mathcal{A}^{i_k}_{k}}(x^{(1)},\ldots,x^{(k-1)},x^{(k+1)},\ldots,x^{(d)})\right.\\
~~+\displaystyle\sum_{1\leq k\neq s\leq d}\sum_{1\leq i_k\leq n_k,1\leq i_s\leq n_s}x^{(k)}_{i_{k}}x^{(s)}_{i_{s}}p_{\mathcal{A}_{ks}^{i_k,i_s}}(x^{(1)},\ldots,x^{(k-1)},x^{(k+1)},\ldots,x^{(s-1)},x^{(s+1)},\ldots,x^{(d)})\\
~~+\ldots\\
~~+(-1)^d\mathcal{A}_{1\ldots d}\bullet (x^{(1)}\otimes \cdots \otimes x^{(d)})\end{array}
$$
with $$\mathcal{A}^{i_k}_{k}=\displaystyle\frac{1}{r_k+2}(a_{i_1j_1\ldots i_{k-1}j_{k-1}i_ki_ki_{k+1}j_{k+1}\ldots i_{d}j_{d}})_{1\leq i_l,j_l\leq n_l,l\in [d]\backslash\{k\}},~~~~~~\forall~i_k=1,\ldots,n_k,$$
$$
\begin{array}{r}
\mathcal{A}_{ks}^{i_k,i_s}=\displaystyle\frac{1}{(r_k+2)(r_s+2)}(a_{i_1j_1\ldots i_{k-1}j_{k-1}i_ki_ki_{k+1}j_{k+1}\ldots i_{s-1}j_{s-1}i_{s}i_{s}i_{s+1}j_{s+1}\ldots i_{d}j_{d}})_{1\leq i_l,j_l\leq n_l,l\in [d]\backslash\{k,s\}},\\
~~~~~~~~~~~~~~~~~~~~~~~~~~~~~~~~~~~~~~~~~~~~~~~~~~~~~~~~~~~~~~~~~~~~~~~~~~~~~~~~~\forall~i_k=1,\ldots,n_k,~i_s=1,\ldots,n_s
,\end{array}
$$
$$
\ldots
$$
$$
\mathcal{A}_{1\ldots d}=\frac{1}{(r_1+2)\cdots(r_d+2)}(a_{i_1i_1i_2i_2\ldots i_di_d})_{1\leq i_k\leq n_k,k=1,\ldots,d}.
$$
Moreover, since $p_{\mathcal{A}}(x^{(1)},\ldots,x^{(d)})\geq p_{\mathcal{A}}^{\rm min}$, $p_{\mathcal{A}^{i_k}_{k}}(x^{(1)},\ldots,x^{(k-1)},x^{(k+1)},\ldots,x^{(d)})\leq p_{\mathcal{A}}^{\rm max}/(r_k+2)$, and so on, for any $x^{(k)}\in \Delta_{n_k}~(k=1,\ldots,d)$,  by using similar arguments as for Theorem \ref{Th5}, we have
\begin{equation*}\label{Mddrr}
  \begin{array}{lll}
p_{\mathscr{C}}^{(r_1,\ldots,r_d)}&\geq&\displaystyle\frac{\prod_{i=1}^d(r_i+2)}{\prod_{i=1}^d(r_i+1)}\left\{\left[1+\displaystyle\sum_{1\leq k\neq s\leq d}\frac{1}{(r_k+2)(r_s+2)}\right.\right.\\
&&\left.+\displaystyle\sum_{1\leq k\neq s\neq l\neq q\leq d}\frac{1}{(r_k+2)(r_s+2)(r_l+2)(r_q+2)}+\ldots\right]p_{\mathcal{A}}^{\rm min}\\
&&-\left.\left[\displaystyle\sum_{1\leq k\leq d}\frac{1}{(r_k+2)}+\sum_{1\leq k\neq s\neq l\leq d}\frac{1}{(r_k+2)(r_s+2)(r_l+2)}+\ldots\right]p_{\mathcal{A}}^{\rm max}\right\}.
\end{array}
\end{equation*}
Consequently, we obtain
\begin{equation}\label{DDEERRS}
 p_{\mathcal{A}}^{\rm min}-p_{\mathscr{C}}^{(r_1,\ldots,r_d)}\leq \frac{\tau(r_1,\ldots,r_d;d)}{\prod_{i=1}^d(r_i+1)}p_{\mathcal{A}}^{\rm max}-\frac{\bar\tau(r_1,\ldots,r_d;d)}{\prod_{i=1}^d(r_i+1)}p_{\mathcal{A}}^{\rm min},
\end{equation}
where $$
\begin{array}{lll}
\bar\tau(r_1,\ldots,r_d;d)&=&\displaystyle\prod_{i=1}^d(r_i+2)-\prod_{i=1}^d(r_i+1)+\sum_{l=(l_1,\ldots,l_d)\in \{0,1\}^d,|l|=d-2}(r_1+2)^{l_1}\cdots (r_d+2)^{l_d}\\
&&\displaystyle+\sum_{l=(l_1,\ldots,l_d)\in \{0,1\}^d,|l|=d-4}(r_1+2)^{l_1}\cdots (r_d+2)^{l_d}+\cdots.
\end{array}
$$
We claim that
\begin{equation}\label{tteeqq}
\bar\tau(r_1,\ldots,r_d;d)=\tau(r_1,\ldots,r_d;d).
\end{equation}
In fact, since $\prod_{i=1}^d(x-a_i)=x^d-(\sum_{i=1}^da_i)x^{d-1}+\cdots+(-1)^d\prod_{i=1}^da_i$, by taking $x=1$ and $a_i=r_i+2$, it holds that
$$\prod_{i=1}^d(r_i+1)=(-1)^d+(-)^{d-1}(\sum_{i=1}^d(r_i+2)x^{d-1}+(-1)^{d-2}\sum_{1\leq i\neq j\leq d}(r_i+2)(r_j+2)+\cdots+\prod_{i=1}^d(r_i+2),$$ which implies
$$\begin{array}{l}
\displaystyle\left[\prod_{i=1}^d(r_i+2)+\sum_{l=(l_1,\ldots,l_d)\in \{0,1\}^d,|l|=d-2}(r_1+2)^{l_1}\cdots (r_d+2)^{l_d}\right.\\
~~~~~~~~~~~~~\displaystyle\left.+\sum_{l=(l_1,\ldots,l_d)\in \{0,1\}^d,|l|=d-4}(r_1+2)^{l_1}\cdots (r_d+2)^{l_d}+\cdots\right]-\prod_{i=1}^d(r_i+1)\\
=\displaystyle\sum_{l=(l_1,\ldots,l_d)\in \{0,1\}^d,|l|=d-1}(r_1+2)^{l_1}\cdots (r_d+2)^{l_d}\\
~~~~~~~~~~~~~\displaystyle+\sum_{l=(l_1,\ldots,l_d)\in \{0,1\}^d,|l|=d-3}(r_1+2)^{l_1}\cdots (r_d+2)^{l_d}+\ldots,
\end{array}
$$
and hence (\ref{tteeqq}) holds. By (\ref{DDEERRS}) and (\ref{tteeqq}), we obtain the first inequality. The second inequality can be proved by a similar way.
\qed\end{proof}

From Theorems \ref{Th5} and \ref{Th6}, we know that a PTAS for solving (\ref{polyOpt}) exists and it can be extended to solving (\ref{BQPS}). Specially, for (\ref{BQPS}), we know, from Theorem \ref{Th6}, that if $r_1=\ldots =r_d=r$, then one has
\begin{equation*}\label{approxrat10}
p_{\mathcal{A}}^{\rm min}-p^{(r,\ldots,r)}_{\mathscr{C}}\leq \frac{\tilde\tau(r;d)}{(r+1)^d}\left(p_{\mathcal{A}}^{\rm max}-p_{\mathcal{A}}^{\rm min}\right)
\end{equation*}
and
\begin{equation*}\label{approxrat20}
p^{(r,\ldots,r)}_{\Delta}-p_{\mathcal{A}}^{\rm min}\leq \frac{\tilde \tau(r;d)}{(r+2)^d}\left(p_{\mathcal{A}}^{\rm max}-p_{\mathcal{A}}^{\rm min}\right),
\end{equation*}
where $\tilde\tau(r;d):=\binom{d}{1}(r+2)^{d-1}+\binom{d}{3}(r+2)^{d-3}+\cdots$. In this case, we have
\begin{equation*}\label{approxrat101}
p_{\mathcal{A}}^{\rm min}-p^{(r,\ldots,r)}_{\mathscr{C}}\leq O(1/(r+1))\left(p_{\mathcal{A}}^{\rm max}-p_{\mathcal{A}}^{\rm min}\right)
\end{equation*}
and
\begin{equation*}\label{approxrat201}
p^{(r,\ldots,r)}_{\Delta}-p_{\mathcal{A}}^{\rm min}\leq O(1/(r+2))\left(p_{\mathcal{A}}^{\rm max}-p_{\mathcal{A}}^{\rm min}\right).
\end{equation*}

\section{Numerical illustration}

In this section, we provide some preliminary numerical results to show that our approximation approach performs reliably on approximately solving StBQP. All codes were written by {\sc Matlab} 2010b and all the numerical tests were conducted on a Lenovo desktop computer with Intel Pentium Dual-Core processor 2.33GHz and 2GB main memory.

\begin{example}\label{exam2}
This example comes from \cite[Example 2]{BLQZ12}, where the objective function defined in \eqref{polyOpt} is specified as
\begin{align*}
p_{\A}(x,y):= \sum_{j=1}^4 a_j(x_1^2+x_2^2)y_j^2 + \sum_{j=1}^3 4b_jx_1x_2y_jy_{j+1}
\end{align*}
and $(x,y)\in\Delta_2\times\Delta_4$, where the vectors $a$ and $b$ are respectively defined by $$a:=[0.7027,0.1536,0.9535,0.5409]^\t\qquad \text{and}\qquad b:=[1.6797,1.0366,1.8092]^\t.$$
As shown in \cite{BLQZ12}, the optimal objective value of this problem is $0.0598$, that is, $p_{\A}^{\min}=0.0598$.
\end{example}

\begin{example}\label{exam1}
The second one is taken from \cite[Example 1]{LNQY09}, which takes the form of
\begin{align*}
p_{\A}(x,y):= x_1^2y_1^2+x_2^2y_2^2+x_3^2y_3^2+2(x_1^2y_2^2+x_2^2y_3^2+x_3^2y_1^2)
-2(x_1x_2y_1y_2+x_1x_3y_1y_3+x_2x_3y_2y_3)
\end{align*}
and $(x,y)\in\Delta_3\times\Delta_3.$ In \cite{BLQZ12} (see also \cite{Cho75}), the authors showed that the global optimal value of the objective function is $0$, that is, $p_{\A}^{\min}=0$.
\end{example}

In the following three examples, we construct the objective function defined in \eqref{polyOpt} in the form of
\begin{equation}\label{Exam}
p_{\A}(x,y) = \A\bullet {\mathcal Z},
\end{equation}
where $\A$ is a $4$-th order tensor and $\mathcal{Z}:=(xx^\top)\otimes (yy^\top)$.


\begin{example}\label{exam3}
The corresponding tensor defined in \eqref{Exam} is given by $\mathcal{A}:=A\otimes B-2C\otimes D$, where
$$
A=\left(
\begin{array}{ccc}
1&2&1\\
2&4&2\\
1&2&1
\end{array}
\right),\; ~B=\left(
\begin{array}{ccc}
1&1&2\\
1&1&2\\
2&2&4
\end{array}
\right),\;~C=\frac{1}{2}\left(
\begin{array}{ccc}
2&3&2\\
3&4&3\\
2&3&2
\end{array}
\right)\;~{\rm and~}D=\frac{1}{2}\left(
\begin{array}{ccc}
2&2&3\\
2&2&3\\
3&3&4
\end{array}
\right).
$$
The simplex constraint is $(x,y)\in \Delta_3\times\Delta_3$. We can verify that the corresponding optimal value is $p^{\min}_\A=-1$.
\end{example}

\begin{example}\label{exam4}
We construct the tensor $\A$ defined in \eqref{Exam} by $\mathcal{A}:=A\otimes B-C\otimes D$ with
$$
A=\left(
\begin{array}{cccc}
1&-3&-2&-1\\
-3&9&6&3\\
-2&6&4&2\\
-1&3&2&1
\end{array}
\right), ~B=\left(
\begin{array}{ccccc}
4&-4&-2&-2&-2\\
-4&4&2&2&2\\
-2&2&1&1&1\\
-2&2&1&1&1\\
-2&2&1&1&1
\end{array}
\right),~C=\left(
\begin{array}{cccc}
-2&2&1&0\\
2&6&5&4\\
1&5&4&3\\
0&4&3&2
\end{array}
\right),~{\rm and}~
D=\left(
\begin{array}{ccccc}
-4&0&-1&-1&-1\\
0&4&3&3&3\\
-1&3&2&2&2\\
-1&3&2&2&2\\
-1&3&2&2&2
\end{array}
\right).
$$
The simplex constraint corresponds to $(x,y)\in\Delta_4\times\Delta_5$, and the global optimal value is $p^{\min}_\A=-4$.
\end{example}

\begin{example}\label{exam5}
For any given positive integer $l$, let $e^{(l)}$ be the $l$-dimensional vector of ones, that is, $e^{(l)}:=(1,1,\cdots,1)^\top$. Correspondingly, let $E_{l}$ be an $l\times l$ matrix of ones. We generate the $4$-th order tensor $\A$ defined in \eqref{Exam} by $\mathcal{A}:=A\otimes B-2C\otimes D$, here
$$
A=\left(
\begin{array}{ccc}
1&-2&-(e^{(n-2)})^\top\\
-2&4&2(e^{(n-2)})^\top\\
-e^{(n-2)}&2e^{(n-2)}&E_{n-2}
\end{array}
\right), ~B=\left(
\begin{array}{ccccc}
1&-1&-2&-(e^{(m-3)})^\top\\
-1&1&2&(e^{(m-3)})^\top\\
-2&2&4&2(e^{(m-3)})^\top\\
-e^{(m-3)}&e^{(m-3)}&2e^{(m-3)}&E_{m-3}
\end{array}
\right)$$
and $$~C=\left(
\begin{array}{ccc}
-1&1/2&0\\
1/2&2&(3/2)(e^{(n-2)})^\top\\
0&(3/2)e^{(n-2)}&E_{n-2}
\end{array}
\right),~
D=\left(
\begin{array}{ccccc}
-1&0&1/2&0\\
0&1&3/2&(e^{(m-3)})^\top\\
1/2&3/2&2&(3/2)(e^{(m-3)})^\top\\
0&e^{(m-3)}&(3/2)e^{(m-3)}&E_{m-3}
\end{array}
\right).
$$
In addition, the simplex constraint corresponds to $(x,y)\in\Delta_n\times\Delta_m$. For any $n\geq 2$ and $m\geq 3$, similarly, we can verify that such problem attains its optimal value $p^{\min}_\A=-1$. In our test, we focus on the case $n=5$ and $m=8$.
\end{example}

Note that our approximation approach is closely related to the choices of $s$ and $r$. Actually, it can be easily seen from Theorem \ref{Th5} that larger $s$ and $r$ could lead to better approximate results. Therefore, we here investigate the behaviors of $s$ and $r$ on approximately solving the problem under consideration. More concretely, we test $12$ groups of $(s,r)$ in the combinations of $s:=\{3,4,8,13\}$ and $r:=\{5,12,17\}$. The corresponding numerical results are summarized in Table \ref{table1}, where $p_{\Delta}^{(s,r)}$ and $p_{\A}^{\min}$ represent the approximate optimal value and the exact optimal value, respectively. As a matter of fact, our examples might have many optimal solutions. Thus, we show graphically the corresponding approximate optimal solutions in Figs. \ref{fig1} and \ref{fig2}.

\begin{table}[!htb]
\begin{center}
\caption{Comparison between approximate optimal values and exact optimal values.}\vskip 0.2mm
\label{table1}
\def\temptablewidth{1\textwidth}
\begin{tabular*}{\temptablewidth}{@{\extracolsep{\fill}}clcccccccccccccc}\toprule
&&\multicolumn{2}{c}{Example \ref{exam2}} &&\multicolumn{2}{c}{Example \ref{exam1}}&&\multicolumn{2}{c}{Example \ref{exam3}}&&\multicolumn{2}{c}{Example \ref{exam4}}&&\multicolumn{2}{c}{Example \ref{exam5}}\\
\cline{3-4} \cline{6-7}\cline{9-10}\cline{12-13}\cline{15-16}
$i$ & $(s,r)$& $p_{\Delta}^{(s,r)}$& $p_{\cal A}^{\min}$ && $p_{\Delta}^{(s,r)}$& $p_{\cal A}^{\min}$ && $p_{\Delta}^{(s,r)}$& $p_{\cal A}^{\min}$ && $p_{\Delta}^{(s,r)}$& $p_{\cal A}^{\min}$ && $p_{\Delta}^{(s,r)}$& $p_{\cal A}^{\min}$\\\midrule
1 &(3,5)& 0.0666 & 0.0598 && 0.00 & 0.00 && -1.00 & -1.00 && -4.00 & -4.00 && -1.00 & -1.00 \\
2 &(3,12)& 0.0668 & 0.0598 && 0.00 & 0.00 && -1.00 & -1.00 && -4.00 & -4.00 && -1.00 & -1.00 \\
3 &(3,17)& 0.0665 & 0.0598 && 0.00 & 0.00 && -1.00 & -1.00 && -4.00 & -4.00 && -1.00 & -1.00 \\
4 &(4,5) & 0.0600 & 0.0598 && 0.00 & 0.00 && -1.00 & -1.00 && -4.00 & -4.00 && -1.00 & -1.00 \\
5 &(4,12)& 0.0601 & 0.0598 && 0.00 & 0.00 && -1.00 & -1.00 && -4.00 & -4.00 && -1.00 & -1.00 \\
6 &(4,17)& \textbf{0.0599} & 0.0598 && 0.00 & 0.00 && -1.00 & -1.00 && -4.00 & -4.00 && -1.00 & -1.00 \\
7 &(8,5)& 0.0600 & 0.0598 && 0.00 & 0.00 && -1.00 & -1.00 && -4.00 & -4.00 && -1.00 & -1.00 \\
8 &(8,12)& 0.0601 & 0.0598 && 0.00 & 0.00 && -1.00 & -1.00 && -4.00 & -4.00 && -1.00 & -1.00 \\
9 &(8,17)& \textbf{0.0599} & 0.0598 && 0.00 & 0.00 && -1.00 & -1.00 && -4.00 & -4.00 && -1.00 & -1.00 \\
10&(13,5)& 0.0603 & 0.0598 && 0.00 & 0.00 && -1.00 & -1.00 && -4.00 & -4.00 && -1.00 & -1.00 \\
11&(13,12)& 0.0605& 0.0598&&0.00 & 0.00 && -1.00 & -1.00 && -4.00 & -4.00 && -1.00 & -1.00 \\
12&(13,17)& 0.0602& 0.0598&&0.00 & 0.00&& -1.00 & -1.00&& -4.00& -4.00&& -1.00& -1.00 \\
\bottomrule
\end{tabular*}
\end{center}
\end{table}

\begin{figure}[!htb]
\includegraphics[width=0.48\textwidth]{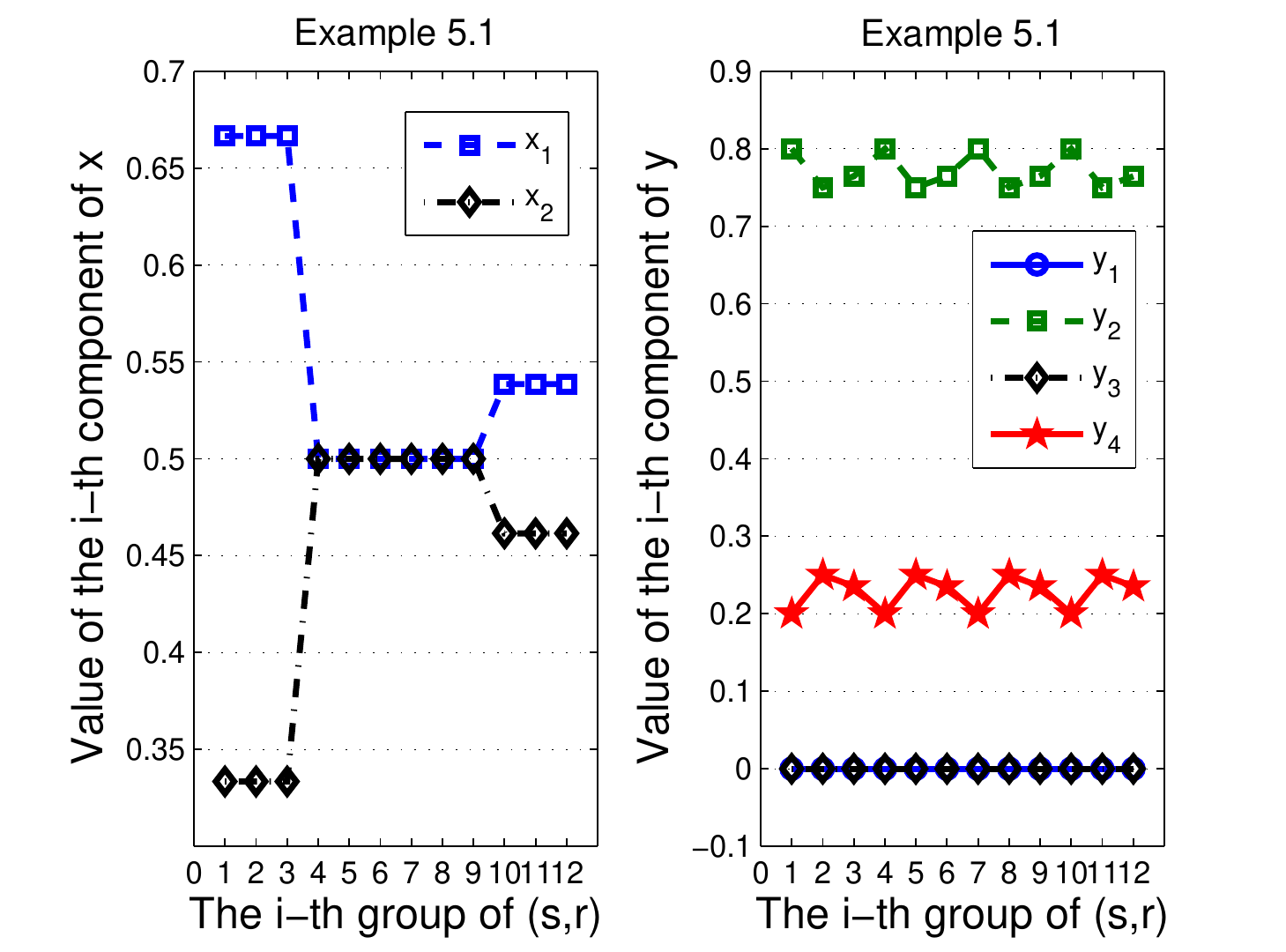}
\includegraphics[width=0.48\textwidth]{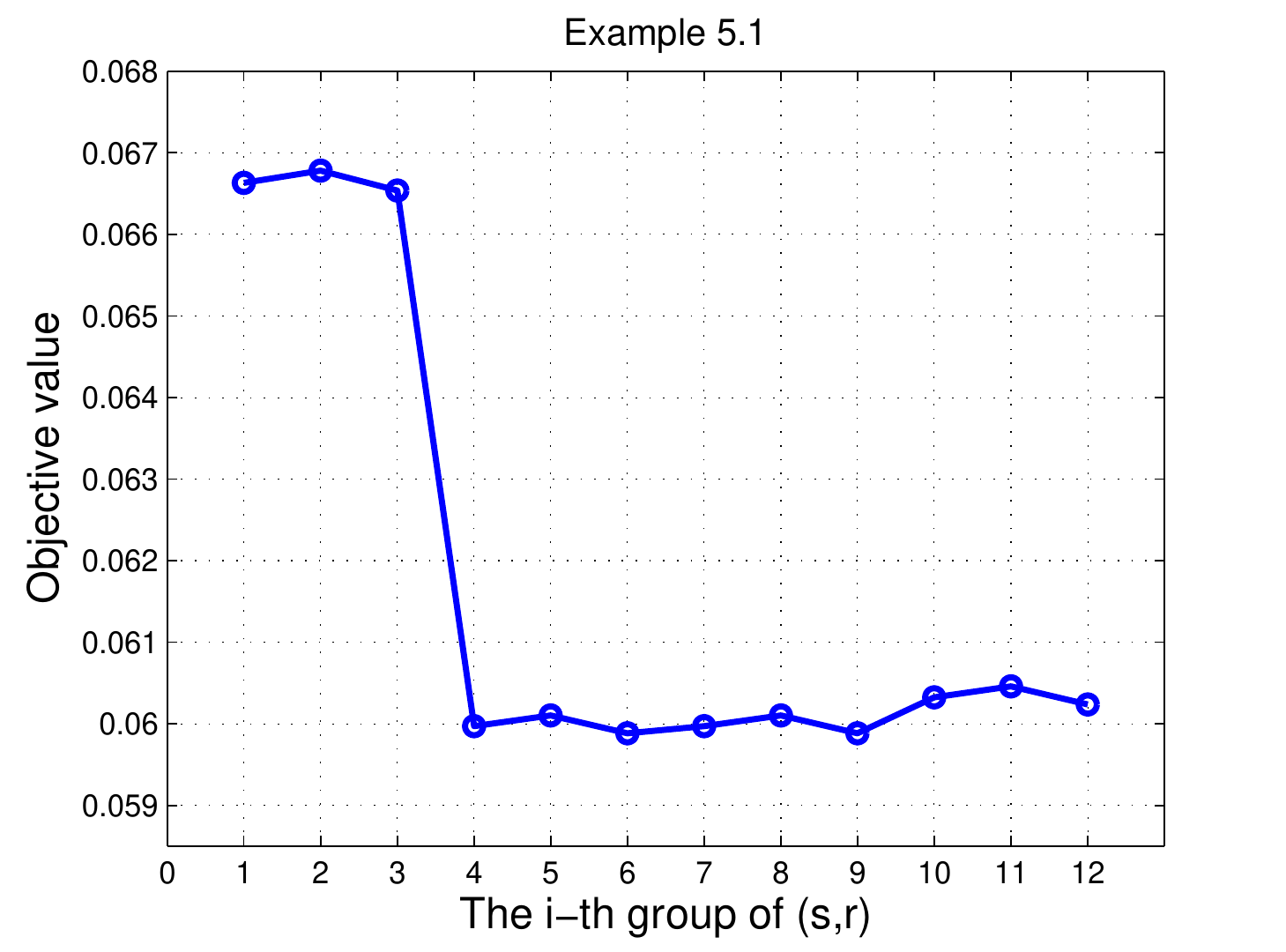}
\caption{Approximate optimal solutions and objective value of Example \ref{exam2} with respect to ($s,r$).}\label{fig1}
\end{figure}

\begin{figure}[!htb]
\includegraphics[width=0.48\textwidth]{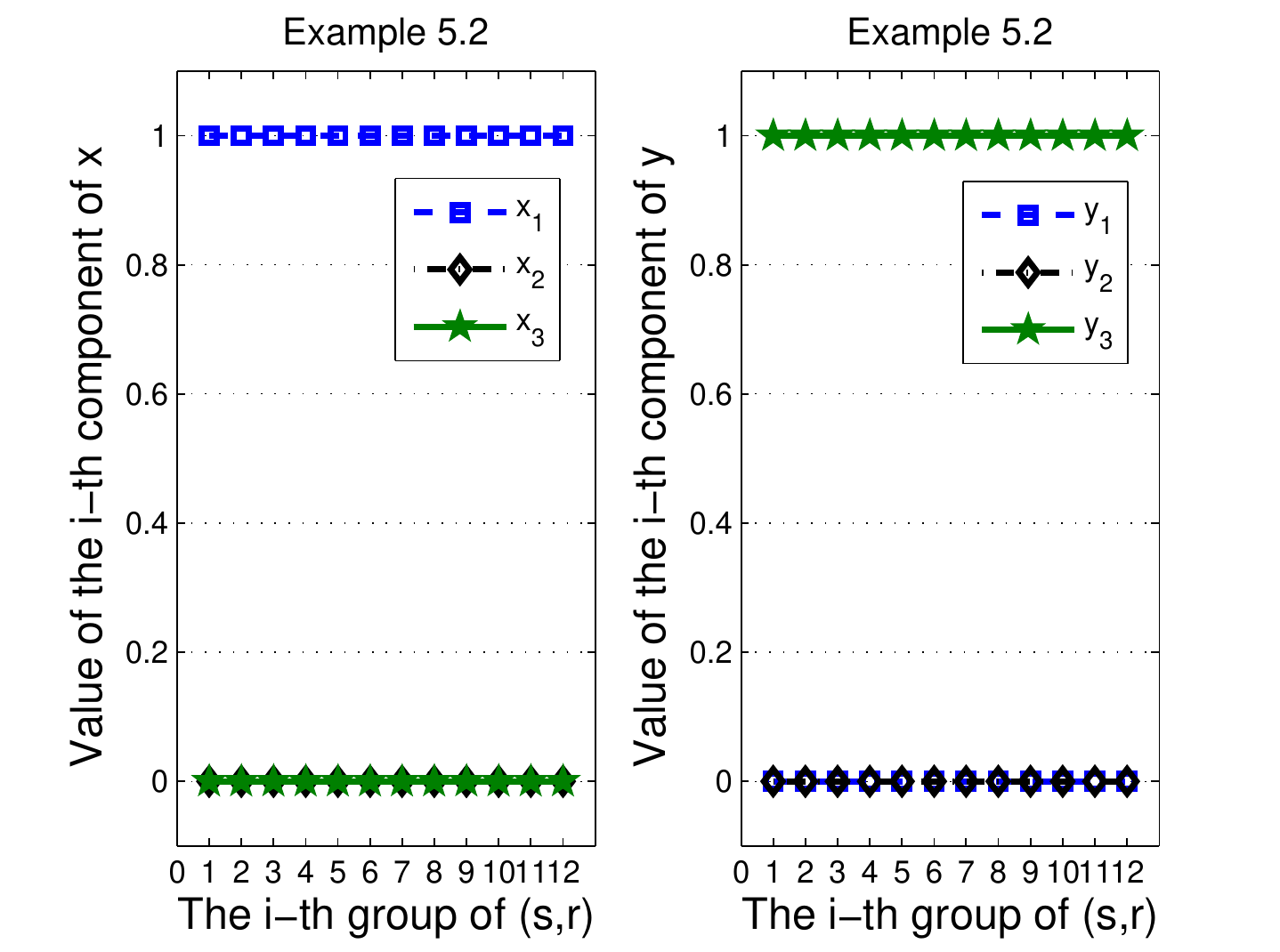}
\includegraphics[width=0.48\textwidth]{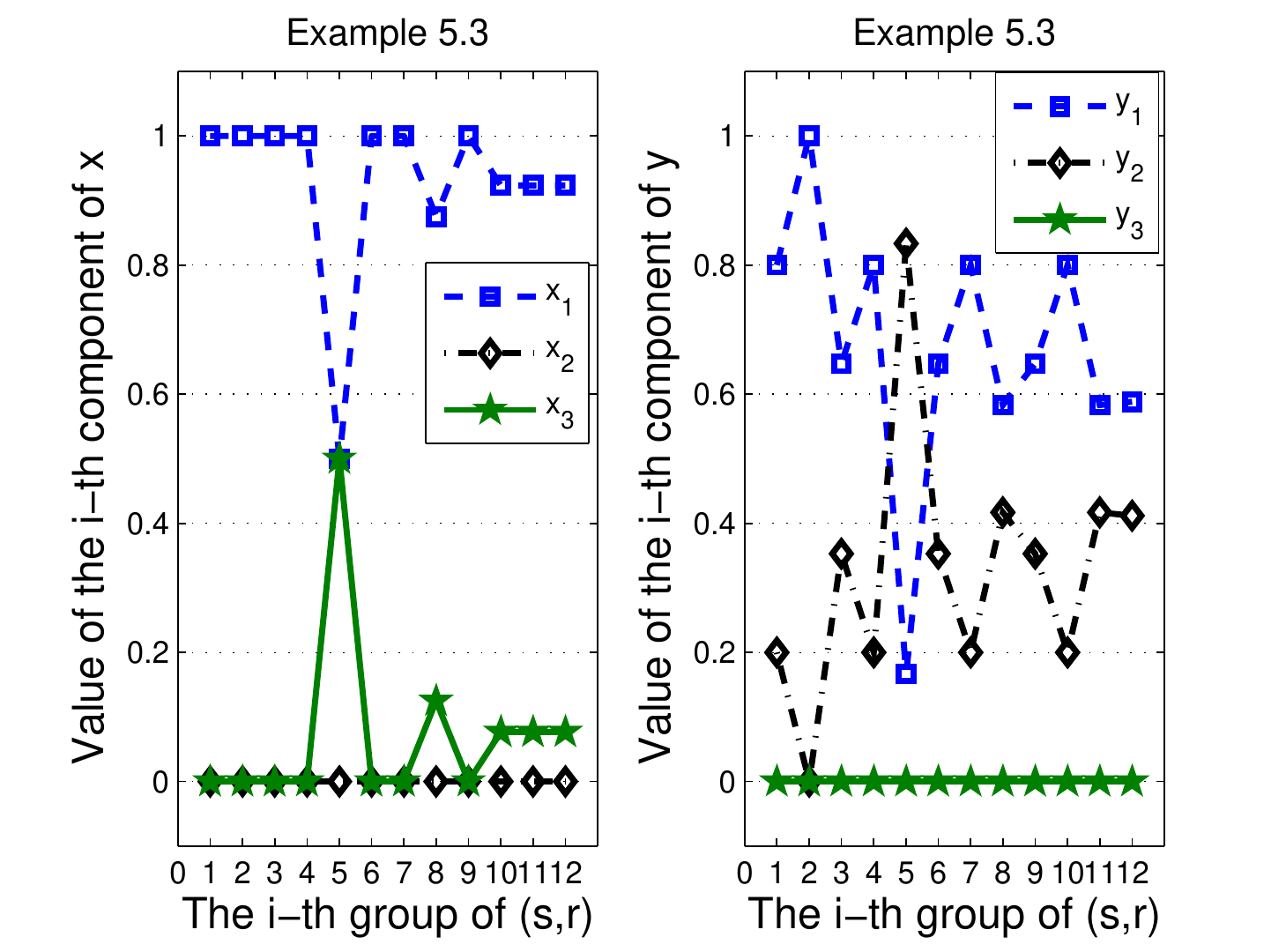}\\
\includegraphics[width=0.48\textwidth]{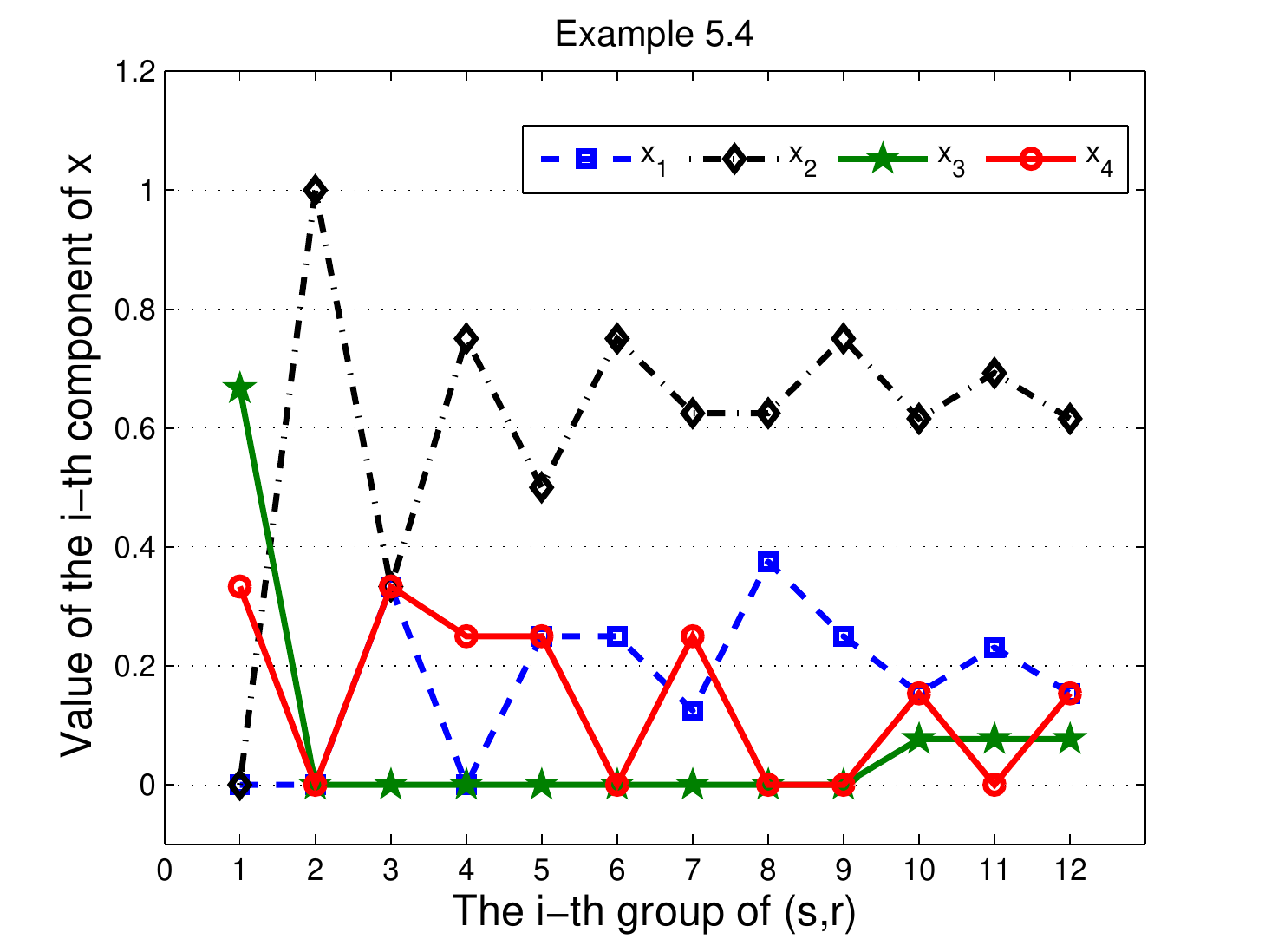}
\includegraphics[width=0.48\textwidth]{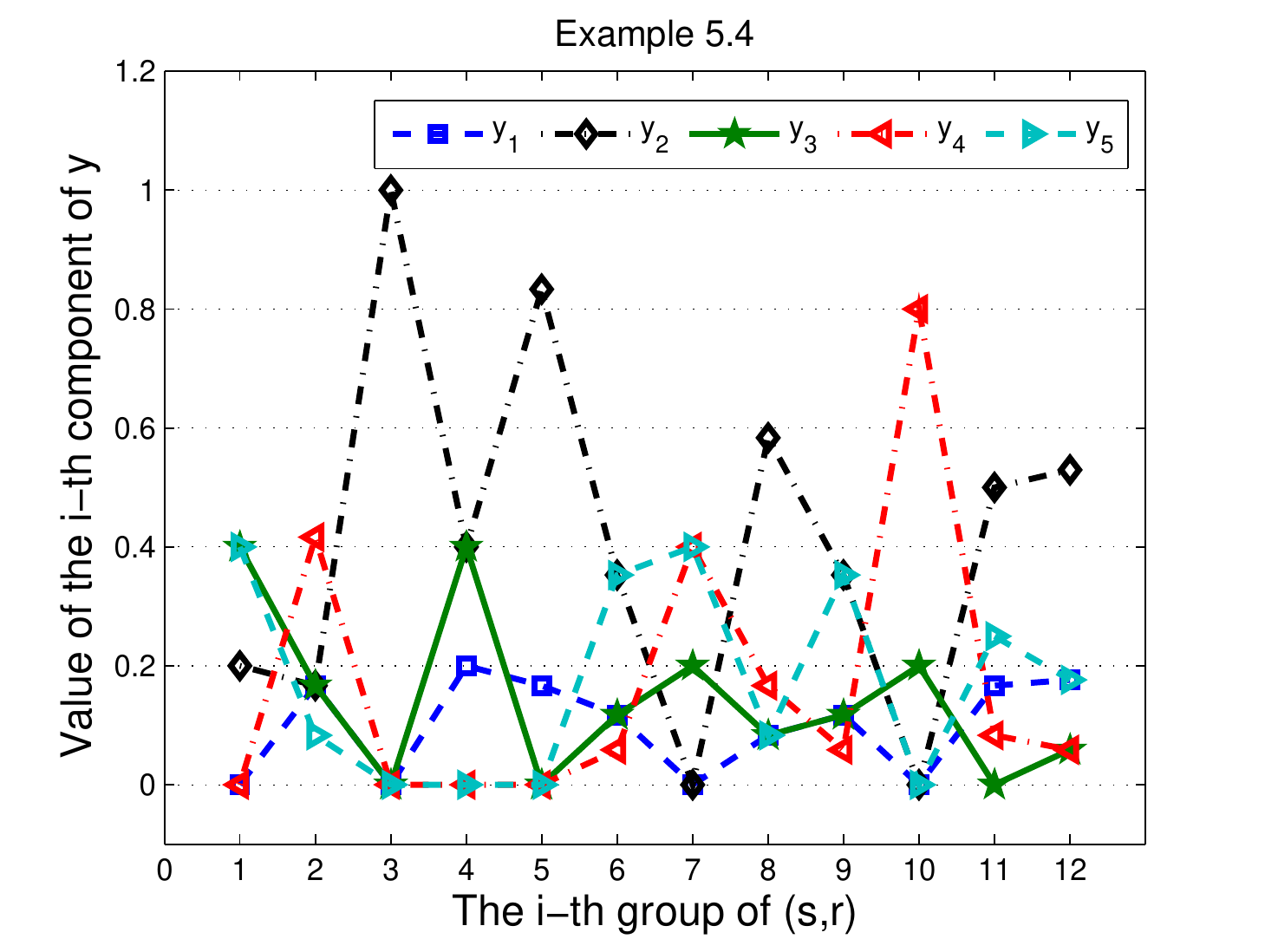}\\
\includegraphics[width=0.48\textwidth]{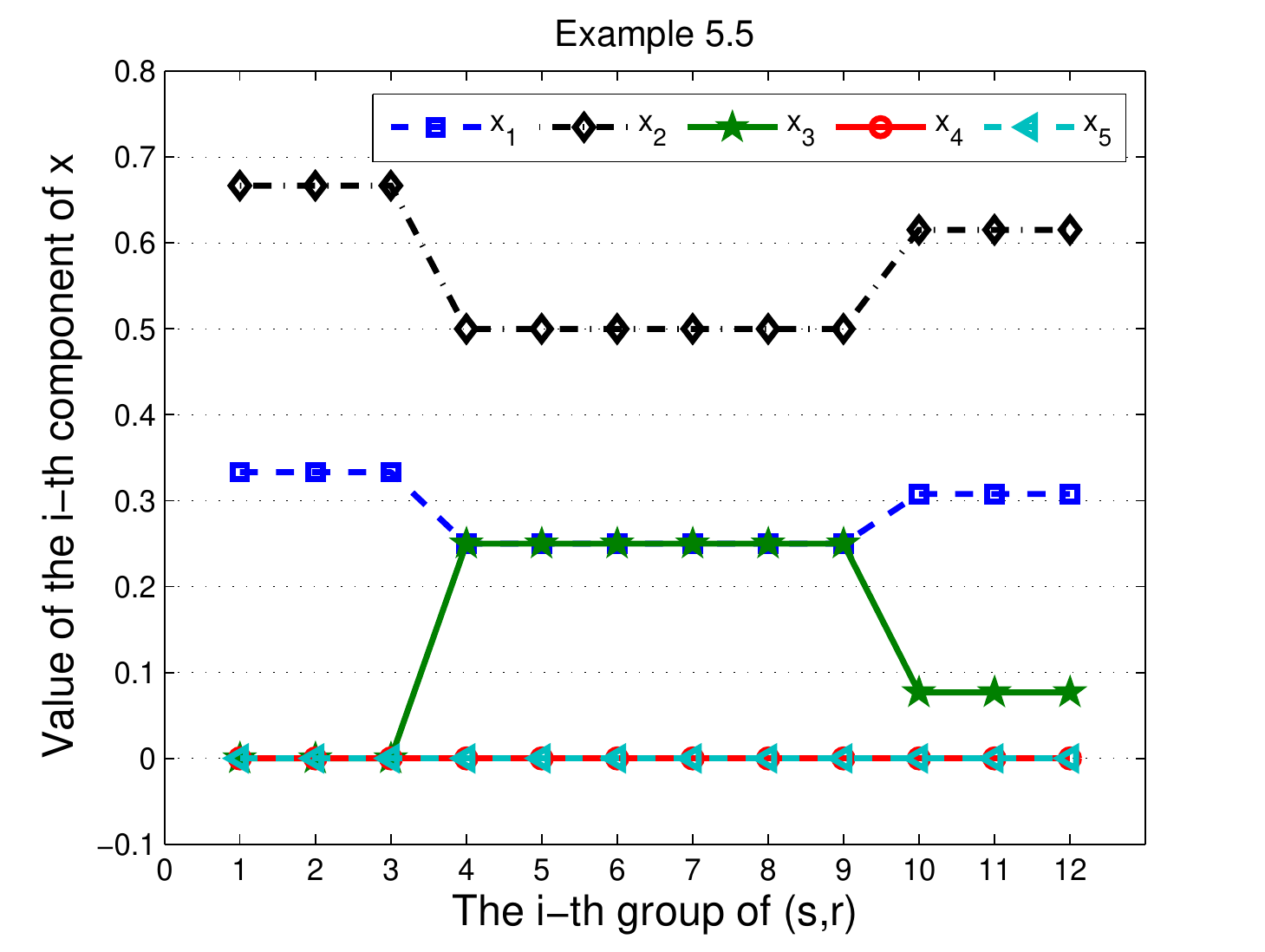}
\includegraphics[width=0.48\textwidth]{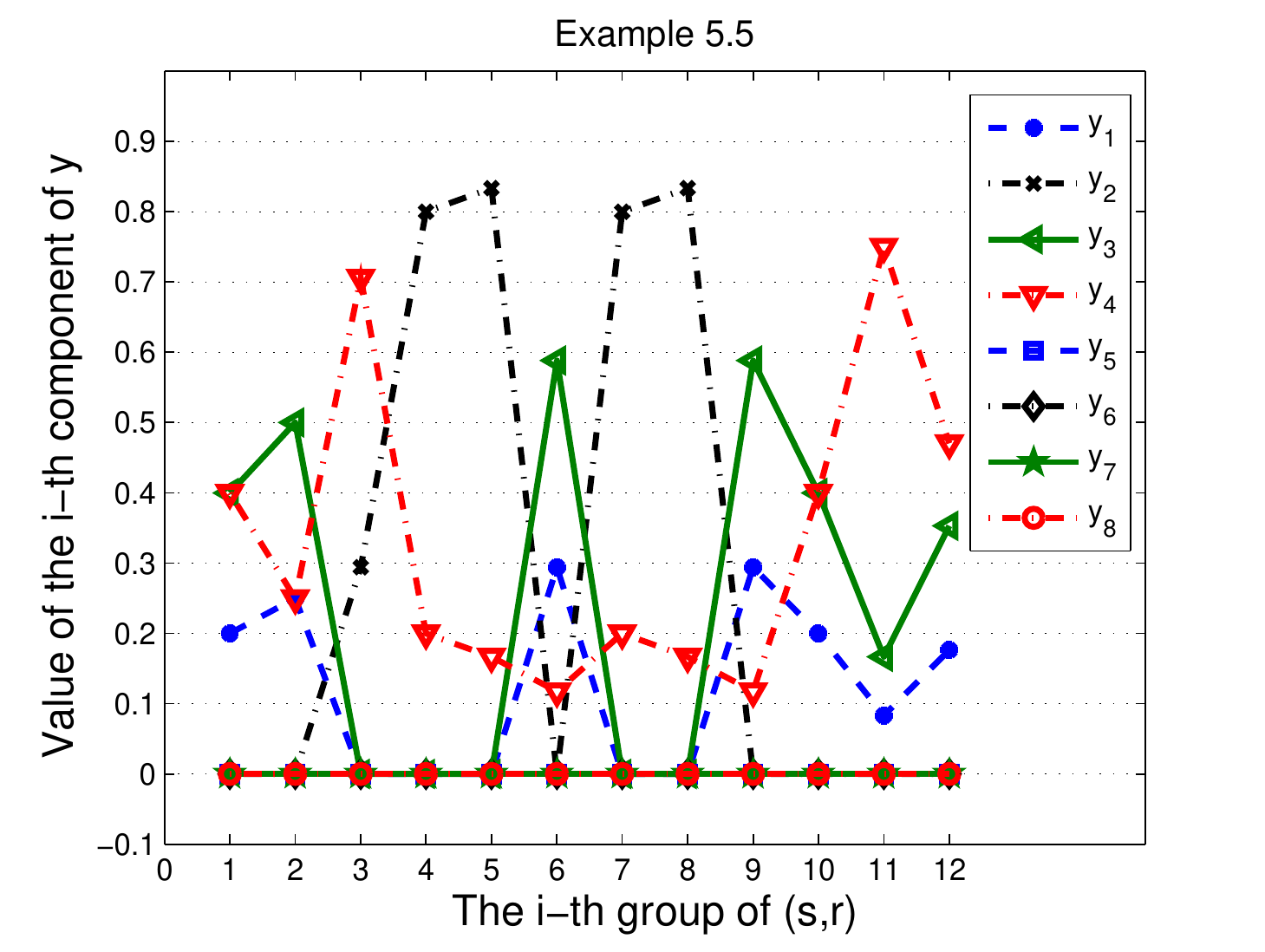}
\caption{Approximate optimal solutions of Examples \ref{exam1}--\ref{exam5} with respect to ($s,r$).}\label{fig2}
\end{figure}

It is clear from Table \ref{table1} and Fig. \ref{fig1} that $s=4$ and $s=8$ yield the best approximate results for Example \ref{exam2}. Moreover, for a fixed $s$, we can easily see that larger $r$ leads to better approximation, which also supports our theoretical results in Theorem \ref{Th5}. More surprisingly, the results in Table \ref{table1} corresponding to Examples \ref{exam1}--\ref{exam5} show that our approximation strategy can exactly get the accurate optimal values in addition to finding different optimal solutions (see Fig. \ref{fig2}). Thus, our results further verify the reliability of the proposed approach.

\section{Conclusions}
In this paper, we suggested an approach to approximating the optimal value of StBQP. After reformulating the original  problem as an equivalent copositive tensor programming problem, a quality of approximation was presented, which is based upon the approximation of the reformulated cone of copositive tensors by a serial polyhedral cones. The obtained quality of approximation showed that a PTAS for StBQP exists, and extended the previously best known approximation result on StQP due to Bomze et al \cite{BK02} to StBQP version. Finally, a quality of approximation for StMQP was also presented.

\begin{acknowledgements}
The first two authors were supported by National Natural Science Foundation of China (NSFC) at Grant Nos. (11171083, 11301123) and the Zhejiang Provincial NSFC at Grant No. LZ14A010003. The third author was supported by the Hong Kong Research Grant Council (Grant Nos. PolyU 502510, 502111, 501212 and 501913).
\end{acknowledgements}


\begin{thebibliography}{10}
\providecommand{\url}[1]{{#1}}
\providecommand{\urlprefix}{URL }
\expandafter\ifx\csname urlstyle\endcsname\relax
  \providecommand{\doi}[1]{DOI~\discretionary{}{}{}#1}\else
  \providecommand{\doi}{DOI~\discretionary{}{}{}\begingroup
  \urlstyle{rm}\Url}\fi

\bibitem{BBPP99}
Bomze, I.M., Budinich, M., Pardalos, P.M., Pelillo, M.: The maximum clique
  problem.
\newblock In: D.Z. Du, P.~Pardalos (eds.) Handbook of Combinatorial
  Optimization, pp. 1--74. Springer, Kluwer, Dordrecht (1999)

\bibitem{BK02}
Bomze, I.M., De~Klerk, E.: Solving standard quadratic optimization problems via
  linear, semidefinite and copositive programming.
\newblock J. Global Optim. \textbf{24}, 163--185 (2002)

\bibitem{BGY14}
Bomze, I.M., Gollowitzer, S., Y{\i}ld{\i}r{\i}m, E.A.: Rounding on the standard
  simplex: {R}egular grids for global optimization.
\newblock J. Global Optim. \textbf{59}, 243--258 (2014)

\bibitem{BLQZ12}
Bomze, I.M., Ling, C., Qi, L., Zhang, X.: Standard bi-quadratic optimization
  problems and unconstrained polynomial reformulations.
\newblock J. Global Optim. \textbf{52}, 663--687 (2012)

\bibitem{BP05}
Bomze, I.M., Palagi, L.: Quartic formulation of standard quadratic optimization
  problems.
\newblock J. Global Optim. \textbf{32}, 181--205 (2005)

\bibitem{Bos83}
Bos, L.P.: Bounding the {L}ebesgue function for lagrange interpolation in a
  simplex.
\newblock J. Approx. Theory \textbf{38}, 43--59 (1983)

\bibitem{Cho75}
Choi, M.D.: Positive semidefinite biquadratic forms.
\newblock Linear Alg. Appl. \textbf{12}, 95--100 (1975)

\bibitem{KLP06}
De~Klerk, E., Laurent, M., Parrilo, P.A.: A {PTAS} for the minimization of
  polynomials of fixed degree over the simplex.
\newblock Theor. Comput. Sci. \textbf{361}, 210--225 (2006)

\bibitem{KLS14}
De~Klerk, E., Laurent, M., Sun, Z.: An alternative proof of a {PTAS} for
  fixed-degree polynomial optimization over the simplex.
\newblock Math. Program. \doi{10.1007/s10107-014-0825-6}

\bibitem{HLP52}
Hardy, G.H., Littlewood, J.E., P{\'o}lya, G.: Inequalities, 2nd edn.
\newblock Cambridge University Press (1952)

\bibitem{LHZ12}
Li, Z., He, S., Zhang, S.: Approximation Methods for Polynomial Optimization
  Methods, Algorithms, and Applications.
\newblock Springer, New York (2012)

\bibitem{LNQY09}
Ling, C., Nie, J., Qi, L., Ye, Y.: Biquadratic optimization over unit spheres
  and semidefinite programming relaxations.
\newblock SIAM J. Optim. \textbf{20}, 1286--1310 (2009)

\bibitem{LZQ14}
Ling, C., Zhang, X.Z., Qi, L.: Approximation bound analysis for the standard
  multi-quadratic optimization problem.
\newblock Tech. rep., School of Science, Hangzhou Dianzi University (2014)

\bibitem{M52}
Markowitz, H.: Portfolio selection.
\newblock J. Financ. \textbf{7}, 77--91 (1952)

\bibitem{MK87}
Murty, K.G., Kabadi, S.N.: Some {NP}-complete problems in quadratic and
  nonlinear programming.
\newblock Math. Program. \textbf{39}, 117--129 (1987)

\bibitem{Nes99}
Nesterov, Y.: Global quadratic optimization on the sets with simplex structure.
\newblock Tech. rep., Katholic University of Louvain, Belgium (1999)

\bibitem{NWY00}
Nesterov, Y., Wolkowicz, H., Ye, Y.: Nonconvex quadratic optimization.
\newblock In: H.~Wolkowicz, R.~Saigal, L.~Vandenberghe (eds.) Handbook of
  Semidefinite Programming: Theory, Algorithms, and Applications, pp. 361--416.
  Kluwer Acadamic Publishers, Dordrecht (2000)

\bibitem{P80}
Pang, J.S.: A new and efficient algorithm for a class of portfolio selection
  problems.
\newblock Oper. Res. \textbf{28}, 754--767 (1980)

\bibitem{Poly74}
P\'{o}lya, G.: \"{U}ber positive darstelluny von polynomen vierteljschr.
\newblock In: Naturforsch. Ges. Zurich, vol.~73, pp. 141--145. MIT Press (1974)

\bibitem{PR01}
Powers, V., Reznick, B.: A new bound for {P}{\'o}lya's theorem with
  applications to polynomials positive on polyhedra.
\newblock J. Pure Appl. Algebra \textbf{164}, 221--229 (2001)

\bibitem{Q13}
Qi, L.: Symmetric nonnegative tensors and copositive tensors.
\newblock Linear Alg. Appl. \textbf{439}, 228--238 (2013)

\bibitem{R01}
Renegar, J.: A mathematical view of interior-point methods in convex
  optimization.
\newblock MPS/SIAM Series on Optimization. SIAM, Philadelphia, PA (2001)

\bibitem{SY13}
Sagol, G., Y{\i}ld{\i}r{\i}m, E.A.: Analysis of copositive optimization based
  bounds on standard quadratic optimization.
\newblock Tech. rep., Department of Industrial Engineering, Koc University,
  Sariyer, Istanbul, Turkey (2013)

\bibitem{So11}
So, A.M.C.: Deterministic approximation algorithms for sphere constrained
  homogeneous polynomial optimization problems.
\newblock Math. Program. \textbf{129}, 357--382 (2011)

\bibitem{SQ14}
Song, Y., Qi, L.: Necessary and sufficient conditions for copositive tensors.
\newblock Linear Multilinear Algebra pp. 1--12.
\newblock \doi{10.1080/03081087.2013.851198}

\bibitem{Y12}
Y{\i}ld{\i}r{\i}m, E.A.: On the accuracy of uniform polyhedral approximations
  of the copositive cone.
\newblock Optim. Method Softw. \textbf{27}, 155--173 (2012)

\end{thebibliography}

\end{document}